\definecolor{lgreen}{rgb}{0.0, 0.48, 0.0}
\definecolor{lpurple}{rgb}{0.48, 0.0, 0.48}
\definecolor{bblue}{rgb}{0.2, 0.4, 0.8}
\definecolor{bgreen}{rgb}{0.2, 0.6, 0.4}
\definecolor{bred}{rgb}{0.8, 0.4, 0.2}
\definecolor{bviolet}{rgb}{0.7, 0.2, 0.7}
\definecolor{blackred}{rgb}{0.6, 0.3, 0.3}
\definecolor{blackblue}{rgb}{0.3, 0.3, 0.6}
\definecolor{byellow}{rgb}{0.8, 0.6, 0.0}
\definecolor{borange}{rgb}{0.8, 0.4, 0.2}
\newcommand{\oset}[3][0ex]{%
  \mathrel{\mathop{#3}\limits^{
    \vbox to#1{\kern-2\ex@
    \hbox{$\scriptstyle#2$}\vss}}}}
\def \n {\overline}
\newcommand{\fn}[1]{%
    \oset[-.3ex]{%
        \rule{5pt}{1pt}%
    }{%
        #1%
    }%
}
\tikzset{
  treenode/.style = {align=center, inner sep=0pt, text centered,
    font=\sffamily},
  arnBleuPetit/.style = {treenode, circle, black, draw=bblue,
    fill=bblue!08,
    minimum width=0.8em, minimum height=0.5em
  },
  arnRougePetit/.style = {treenode, circle, black, draw=bred,
    fill=bred!20,
    minimum width=0.8em, minimum height=0.5em
  },
  arnVertPetit/.style = {treenode, circle, black, draw=bgreen,
    fill=bgreen!20,
    minimum width=0.8em, minimum height=0.5em
  },
  arnVioletPetit/.style = {treenode, circle, black, draw=bviolet,
    fill=bviolet!08,
    minimum width=0.8em, minimum height=0.5em
  },
  arnBleuGrande/.style = {treenode, circle, bblue, draw=bblue,
    text width=1.5em, very thick,
    fill=bblue!10},
  arnVioletGrande/.style = {treenode, circle, bviolet, draw=bviolet,
    text width=1.5em, very thick,
    fill=bblue!10},
  arnOrangeGrande/.style = {treenode, circle, borange, draw=borange,
    text width=1.5em, very thick,
    fill=borange!10},
  arnVertGrande/.style = {treenode, circle, bgreen, draw=bgreen,
    text width=1.5em, very thick,
    fill=bgreen!10},
  arn_nn/.style = {treenode, circle, bblue, draw=bblue,
    fill=bblue!10,
    minimum width=0.9em, minimum height=0.9em
  },
  arn_rr/.style = {treenode, circle, bred, draw=bred,
    fill=bred!10,
    minimum width=0.9em, minimum height=0.9em
  },
  arn_n/.style = {treenode, circle, black, draw=bblue,
    text width=1.25em, very thick,
    fill=bblue!08},
  arn_b/.style = {treenode, circle, black, draw=bblue, 
    text width=1.25em, very thick,
    fill=bblue!08},
  arn_r/.style = {treenode, circle, black, draw=bred, 
    text width=1.15em, very thick,
    fill=red!08},
  arn_v/.style = {treenode, circle, bviolet, draw=bviolet, 
    text width=1.5em, very thick,
    fill=bviolet!10},
  arn_g/.style = {treenode, circle, bgreen, draw=bgreen, 
    text width=1.5em, very thick,
    fill=bblue!10},
  arn_y/.style = {treenode, circle, byellow, draw=byellow, 
    text width=1.5em, very thick,
    fill=byellow!10},
  arn_x/.style = {treenode, triangle, draw=black,
    minimum width=0.5em, minimum height=0.5em},
  triangle/.style = {treenode, bred, draw=bred,
      fill=bred!20, regular polygon, regular polygon
    sides=3, very thick, text width=1.5em },
}
\newcommandx*{\fromto}[7][3=black,4=black,5=0,6=0,7=.995,usedefault]{
    \path (#1) edge [thick,#4,bend right=#5,
    decoration={markings,mark=at position #7 with
    {\arrow[thick,#3, rotate=-#6]{>}}}, postaction={decorate}
    ] node {} (#2);
}
\newcommand{\aprod}[2]{
    \path (#1) edge [blackred,thick,
    decoration={markings,mark=at position 1 with
    {\arrow[ultra thick,blackred, rotate=0]{>}}}, postaction={decorate}
    ] node {} (#2);
}
\theoremstyle{definition}
\newtheorem{theorem}{Theorem}[section]
\newtheorem{remark}[theorem]{Remark}
\newtheorem{proposition}[theorem]{Proposition}
\newtheorem{lemma}[theorem]{Lemma}
\newtheorem{definition}[theorem]{Definition}
\newtheorem{example}[theorem]{Example}
\newcommand*{\eg}{\textit{e.g.}\@\xspace}
\newcommand*{\ie}{\textit{i.e.}\@\xspace}
\newcommand*{\cf}{\textit{c.f.}\@\xspace}
\newcommand*{\resp}{resp.\@\xspace}
\def \n   {\overline}
\def \proba {\mathbb{P}}
\newcommand{\graphic}[1]{\mathbf{#1}}
\newcommand{\gset}{\widehat{\graphic{Set}}}
\newcommand{\idset}{\ddot{\graphic{Set}}}
\newcommand{\SCC}{\mathrm{SCC}}
\newcommand{\CSCC}{\mathrm{CSCC}}
\newcommand{\scc}{\mathrm{Scc}}
\newcommand{\cscc}{\mathrm{Cscc}}
\newcommand{\CNF}{\mathrm{CNF}}
\newcommand{\SAT}{\mathrm{SAT}}
\newcommand{\UNSAT}{\mathrm{UNSAT}}
\newcommand{\True}{\mathrm{True}}
\newcommand{\False}{\mathrm{False}}
\newcommand{\mA}{\mathcal{A}}
\newcommand{\mB}{\mathcal{B}}
\newcommand{\mC}{\mathcal{C}}
\newcommand{\mD}{\mathcal{D}}
\newcommand{\mF}{\mathcal{F}}
\newcommand{\bigO}{O}
\let \leq \leqslant
\let \geq \geqslant
\newcommand{\point}[4]{
  \draw
  node[arn_nn](#1a) at (#2, #3) {  }
  node[arn_rr](#1b) at (#2, #3+0.5) { }
  ;
  \node[rectangle,dashed,draw,fit=(#1a)(#1b),
    rounded corners=3mm,inner sep=3pt, bgreen] {};
  \node at (#2+#4,#3) {$ {#1} $};
  \node at (#2+#4,#3+0.5) {$\n {#1}$};
 }
\newcommand{\hpoint}[4]{
  \draw
  node[arn_nn](#1a) at (#2, #3) {  }
  node[arn_rr](#1b) at (#2+0.5, #3) { }
  ;
  \node[rectangle,dashed,draw,fit=(#1a)(#1b),
    rounded corners=3mm,inner sep=3pt, bgreen] {};
  \node at (#2,#3+#4) {$ {#1} $};
  \node at (#2+0.5,#3+#4) {$\n {#1}$};
 }
 \newcommandx*{\sedge}[4][3=blackblue,4=0,usedefault]{
    \path (#1) edge [thick,#3,bend right=#4] node {} (#2);
}
\title{Exact enumeration of satisfiable 2-SAT formulae}
\author{Sergey Dovgal$^{1}$, \'Elie de Panafieu$^{2}$ and Vlady Ravelomanana$^{3}$}
\date{
  \footnotesize{
    $^{1}${LaBRI, CNRS UMR 5800. Université de Bordeaux.
    \href{mailto:vit.north@gmail.com}{\textsf{dovgal.alea-at-gmail.com}}}\\
    $^{2}${Nokia Bell Labs.
    \href{mailto:elie.de_panafieu@nokia-bell-labs.com}{\textsf{elie.de\_panafieu-at-nokia-bell-labs.com}}}\\
    $^{3}${IRIF, CNRS UMR 8243. Université de Paris.
    \href{mailto:vlad@irif.fr}{\textsf{vlad-at-irif.fr}}}%
    \footnote{Authors presented in alphabetical order.}\\
  }
  \today
}
\begin{document}

\maketitle

\begin{abstract}
We obtain exact expressions counting the satisfiable 2-SAT formulae and describe the structure of associated implication digraphs.
Our approach is based on generating function manipulations.
To reflect the combinatorial specificities of the implication digraphs, we introduce a new kind of generating function, the Implication generating function, inspired by the Graphic generating function used in digraph enumeration.
Using the underlying recurrences, we make accurate numerical predictions of the phase transition curve of the 2-SAT problem inside the critical window.
We expect these exact formulae to be amenable to rigorous asymptotic analysis using complex analytic tools, leading to a more detailed picture of the 2-SAT phase transition
in the future.
\end{abstract}

\tableofcontents

\section{Introduction}

A $k$-CNF (Conjunctive Normal Form), or $k$-SAT formula,
is a collection (conjunction) of clauses,
where each clause is a disjunction of Boolean literals,
and each Boolean literal is either a Boolean variable $x$
or its negation $\n x$
(\eg \( (x_1 \lor x_2 \lor \n x_3) \land (\n x_1 \lor x_3 \lor x_4) \)
for a $3$-CNF).
A $k$-CNF is \emph{satisfiable}
if it returns \( \True \) for at least one instantiation
of its variables.
More formal definitions are postponed until \cref{sec:definitions:notations}.

The problem $k$-SAT of deciding whether a $k$-CNF is satisfiable
is a central Constraint Satisfaction Problem (CSP for short).
It is NP-complete for $k \geq 3$ \cite{Coo71},
but $2$-SAT is solvable in polynomial
\cite{krom1967decision, even1975complexity}.
and even in linear~\cite{APT79} time.
This linear algorithm for 2-SAT solving relies on the so-called
\emph{implication digraphs} which are digraphs built from the 2-CNF
where each of the clauses \( (x \lor y) \) is replaced with two directed
edges \( \n x \to y \) and \( \n y \to x \), which corresponds to a logically
equivalent replacement of disjunction by an implication (see further
discussion in \cref{sec:structural:properties:cnf}).
Although $2$-SAT is simple from a computational perspective,
some of its variants are difficult.
For example, the problem of counting the number of solutions
of a $2$-SAT formula is NP-complete \cite{garey1974some},
and even approximating it within a factor less than $4/3$
is NP-complete~\cite{hastad2001}.


\bigskip

The main contribution of the current paper
is an exact expression for the number of satisfiable $2$-CNF
with a given number of variables and clauses
(other results are summarized in \cref{sec:our:results}).
Our enumeration is expressed using generating functions
and our tools are influenced by directed graph (digraph for short)
enumeration.
Before stating our results on the enumeration of satisfiable $2$-CNF
in~\cref{sec:our:results},
the three next subsections provide some historical background
and discuss various angles of attack for the problem.

    \subsection{Exact enumeration}

\paragraph{Graphs.}
Graphical enumeration (see Harary and Palmer~\cite{HararyPalmer})
is a classical theme
in enumerative combinatorics.
It started in the late 19th century,
with the enumeration of labeled trees
by Borchardt~\cite{borchardt1861uber}
and Cayley~\cite{Cayley}.
Around a hundred years later,
in a series of important papers,
Wright~\cite{Wright77,Wright78,Wright80}
obtained exact expressions for the number of connected graphs
according to their number of vertices and edges.
Those expressions rely on the framework of generating functions,
and excellent introductions to those tools are provided
by Bergeron, Labelle, and Leroux~\cite{bergeron1998combinatorial},
and Flajolet and Sedgewick~\cite{flajolet2009analytic}.

\paragraph{Digraphs.}
In the 1970's, Liskovets, Robinson and Wright~\cite{Wright71,liskovets17contribution,Robinson71}
developed the enumeration of strongly connected digraphs,
directed acyclic graphs, and related digraph families.
Most importantly, Robinson was able to lift the enumeration
from the level of recurrences to the level of generating functions
and obtain the tools to handle very general digraph families.
Later, Robinson and Liskovets extended these methods
to unlabeled enumeration.
A detailed historical account on the development of
the directed enumeration can be found in
the introduction of~\cite{de2019symbolic},
where this method has been again rediscovered.

\paragraph{2-CSP.}

A CSP formula is similar to a CNF formula, but more logical operators
are allowed, such as the XOR operation $\oplus$ and logical implication
$\to$, further expanding to Boolean functions of more than two
variables. 

As various communities, namely computer scientists, probabilists and
physicists
\cite{achlioptas2008algorithmic,bollobas2001scaling,monasson1997statistical},
have been interested in CSP, in what follows we propose to present the
line of research and the links between them that led us to approach the
$2$-SAT problem via enumerative combinatorics and the use of generating
functions.  
In a CSP formula where each clause relates to
exactly $2$ variables (or $2$-CSP),
it is very natural to represent the whole formula, i.e. the collection of clauses,
 via a graph where the clauses (resp. the variables)
are the edges (resp. the nodes) of the graph.

As far as graphs are concerned, the works of
Daud\'e and Ravelomanana~\cite{herve2011random} and Pittel and Yeum~\cite{YeumPittel} 
show that analyses based on graphical enumeration~\cite{Wright77,Wright80,HararyPalmer}
play a substantial role as such an approach  lead to
remarkably accurate results on the probability of satisfaction of a random formula. 
In the aforementioned works, the
enumerative approach consists in encoding the objects to be enumerated
with the help of generating functions.
%
Over the past decade
(see \cite{herve2011random,de2019symbolic,AofA20,DigraphWindow}
for reference)
it has turned out that it is fundamental to understand how to enumerate 
directed graphs before tackling the enumeration of $2$-SAT formulae.


    \subsection{Structure and phase transition}

\paragraph{Graphs.}
The two most natural models of random graphs
are $G(n,m)$ and $G(n,p)$.
Both generate graphs on $n$ vertices.
$G(n,m)$ also fixes the number $m$ of edges
and samples the graph uniformly at random,
while $G(n,p)$ adds an edge between each pair of vertices
independently with probability $p$.
Random graphs with a large number of vertices
exhibit similar limit statistical properties in both models,
as shown by Bollob\'as~\cite{bollobasrandomgraphs}.

Erd\H{o}s and R\'enyi~\cite{ER60} studied
the structure of random $G(n,m)$ graphs.
Let us transpose in the $G(n,p)$ model
one of their most striking results.
For $p = c/n$, with probability tending to $1$,
a random $G(n,p)$ graph contains, as \( n \to \infty \),
\begin{itemize}
\item
only trees and unicycles (components having only one cycle) if $c < 1$,
\item
only trees, unicycles, and a unique ``\emph{giant}'' component,
        of size $n \cdot f(c) + o(n)$, if $c > 1$.
\end{itemize}
Such a drastic change of behavior is called
a \emph{phase transition}
(a term borrowed from theoretical physics).
The typical structure of random $G(n,p)$ graphs
for $p$ close to $1/n$ was investigated
by Stepanov~\cite{stepanov1988some}
and described in fine detail
by Janson, Knuth, {\L}uczak and Pittel~\cite{janson1993birth}.
They showed that in the \emph{critical window},
corresponding to $p = \frac{1}{n}(1 + \bigO(n^{-1/3}))$,
a random $G(n,p)$ graph has a positive limit probability
to contain several connected components
that are neither trees nor unicycles.
They also derived the limit distributions
for various statistics of those components.
As we discuss below, the satisfiability
of random $2$-SAT formulae 
undergoes a similar phase transition.
Finally, an even more precise description of large graphs
in the critical window has been obtained
by Addario-Berry, Broutin and Goldschmidt~\cite{addario2012continuum},
in the form of a \emph{scaling limit}
(a geometrical limit for random graphs with a large number of vertices
expressed in terms of various stochastic processes related to the
Brownian motion).


\paragraph{Digraphs.}
The random models $G(n,m)$ and $G(n,p)$
extend naturally to digraphs.
In the $D(n,p)$ model,
the generated digraph has $n$ vertices and
each of the \( n(n-1) \) ordered pairs of distinct vertices
becomes an arc independently with probability $p$.
As in the case of graph,
the structure of digraphs undergoes a phase transition,
located by Karp~\cite{Karp1990} and \L{}uczak~\cite{Luczak1990}.
With probability tending to $1$ as $n$ tends to infinity,
the strongly connected components of a random $D(n,c/n)$ digraph
are
\begin{itemize}
\item
cycles or single vertices if $c < 1$,
\item
cycles, single vertices,
and a unique \emph{giant} strong component
containing a linear proportion of all vertices
if $c > 1$.
\end{itemize}
\L{}uczak and Seierstad~\cite{luczak2009critical}
derived the width $n^{-1/3}$ of the critical window.
For $p = \frac{1}{n}(1 + \bigO(n^{-1/3}))$,
they established that the size
of the largest strongly connected component is of order $n^{1/3}$.
Recently, Goldschmidt and Stephenson~\cite{Christina2019}
derived a scaling limit for random digraphs
inside the critical window.
Using a generating function approach,
Dovgal, de Panafieu, Ralaivaosaona, Rasendrahasina
and Wagner~\cite{DigraphWindow}
also obtained precise information on the typical structure
of random $D(n,p)$ digraphs inside the critical window.

\paragraph{2-SAT.}
In the scope of the current paper, we assume that inside of all the clauses of a
2-CNF, the literals have distinct Boolean variables. This results in \(
2n(n-1) \) possible clauses in case of \( n \) variables.
As for graphs, there are two natural models for random 2-CNF.
In the $(n,m)$ model, the numbers of variables $n$
and clauses $m$ are fixed,
and the formula is sampled uniformly at random.
This is akin to the $G(n,m)$ random graph model.
In the $(n,p)$ model, the number of variables $n$
and a probability $p$ are fixed,
and the formula is built by adding each of the \( 2n(n-1) \) possible
clauses independently with probability $p$.
This second model is akin to the $G(n,p)$ random graph model.
As in the case of graphs, we expect random formulae
to behave similarly under both models
as $n$ tends to infinity.
We refer to \cite{bollobas2001scaling} for details regarding the
correspondence between these models as \( n \to \infty \).

Our enumerative results allow us to express exactly
the probability for a random formula to be satisfiable
in the $(n,m)$ model.
In \cref{proposition:proba:np},
we show how to translate this result to the $(n,p)$ model as well.
The most popular model for random 2-SAT formulae is the $(n,p)$ model,
so we focus on it in the rest of the discussion.

Let $\proba_{\SAT}(n,p)$ denote the probability
for a random $(n,p)$ CNF to be satisfiable.
It was shown by Goerdt~\cite{goerdt92},
Chv\'atal and Reed~\cite{chvatalreed92}
and Fernandez de la Vega~\cite{de2001random}
that the limit of $\proba_{\SAT}(n,p)$ is $1$
for $p = c/n$ with $c < 1/2$, and $0$ if $c > 1/2$.
This sharp change is called the phase transition of $2$-SAT.
Bollob{\'a}s, Borgs, Chayes, Kim and Wilson~\cite{bollobas2001scaling}
refined their predictions and showed that
the limit probability of satisfiability $\proba_{\SAT}(n,p)$
is \( 1 - \Theta(|\mu|^{-3}) \)
(or, respectively, \( \exp(-\Theta(\mu^3)) \)) for
$p = \frac{1}{2 n} \left(1 + \mu n^{-1/3} \right)$
when \( \mu \to -\infty \) (or, respectively, \( \mu \to + \infty \)),
suggesting that the only region where this probability could be
non-trivial is for $\mu$ staying in a compact real interval.
Some of
these estimated were further refined by Kim~\cite{kim2008finding}
through Poisson cloning and Dovgal~\cite{dovgal2019birth}
with a different technique.
Further results on MAX SAT around the phase transition window
have been obtained in~\cite{coppersmith2004random}.

To describe the behavior of this phase transition,
we say that the critical window has width $n^{-1/3}$.
Let use define $\proba_{\SAT, \infty}(\mu)$ as the limit
\[
    \proba_{\SAT, \infty}(\mu) =
    \lim_{n \to +\infty}
    \proba_{\SAT} \left(
        n,
        \frac{1}{2 n} \left(1 + \mu n^{-1/3} \right)
    \right).
\]
A question of interest in the study of phase transitions
is the computation of the function $\proba_{\SAT, \infty}(\mu)$.
In the current paper,
we obtain exact expressions for the number of satisfiable 2-CNF
that are related to, though more complex than,
the expressions counting digraph families.
Our hope is that the analytic tools developed
by~\cite{DigraphWindow} to analyze the phase transition of digraphs
can be extended to 2-CNF and express $\proba_{\SAT, \infty}(\mu)$.
We also use those exact expressions to obtain
highly accurate (although non-rigorous) numerical predictions
of the curve of $\proba_{\SAT, \infty}(\mu)$.

    \subsection{Discussion of possible strategies}\label{sec:proof_strategies}

In the current work, we focus on the exact enumeration
of families related to $2$-SAT formulae.
To do this, we further extend the symbolic method
for enumeration of directed graphs.
Indeed, digraphs are closely related to $2$-SAT formulae
since the latter can be represented using implication digraphs.
An unsatisfiable formula is distinguished
by the presence of a specific subgraph -- a contradictory circuit -- inside its implication digraph.
We refer to an approach of Collet, de Panafieu, Gardy, Gittenberger and
Ravelomanana~\cite{collet2018threshold}
based on finding induced subgraphs in random graphs,
using generating functions as well.
Unfortunately, such an approach does not allow to capture
the counting recurrence in an efficient way,
because the family of the required patterns is too large.

Another possible approach is an inclusion-exclusion method, or a more
refined probabilistic tool based on distinguishing a random variable
inside a formula, \ie \emph{statistic}.
Two particular statistics can be used to count 2-SAT formulae:
the number of Boolean assignments satisfying the formula, and
the number of contradictory variables (defined below).
A 2-SAT formula is satisfiable if and only if
it has at least one satisfiable assignment.
Equivalently, it is satisfiable if and only if
it contains no contradictory variable.
%
Other statistics have historically been used to produce upper and
lower bounds on the probability that a random formula is satisfiable.

The expression obtained by applying inclusion-exclusion
on the number of satisfiable Boolean assignments indeed gives a
computable expression, which, however,
produces an exponential number of summands.
This approach is not promising, neither from a computational
nor from a theoretical viewpoint,
due to the rapid growth of magnitude of the alternating terms.
On the other hand, the first moment method applied to the number of satisfiable Boolean assignments
provides bounds on the location of the phase transition,
while the second moment method requires a more delicate choice of the
underlying random variable and fails to provide
a tight bound with this statistic for general $k$-SAT.
Recent breakthroughs in the asymptotic threshold of $k$-SAT
for large $k$~\cite{DingSlySun,coja2016asymptotic}
also rely on a careful choice of the right statistic:
the authors consider \emph{clusters} of solutions
instead of the total number of satisfying assignments.

Let us briefly turn to the work of Bollob\'as, Borgs, Chayes, Kim and
Wilson~\cite{bollobas2001scaling}.
Let us write $H \subset F$ if the clauses of the CNF $H$
are included in the clauses of the CNF $F$.
In the combinatorial study of random 2-SAT, and more generally,
$k$-SAT, one of the key features of a random formula has been its
\emph{spine}, which is defined as
\[
S(F) = \{ x \, | \, \exists H \subset F, \, H
\text{ is } \SAT \text{ and } H \land x \text{ is } \UNSAT \} \, .
\]
The spine can be seen as a set of literals that are forced to take
False values in any satisfying assignment.
However, for unsatisfiable formulae the spine is also 
well-defined.
As an alternative to this purely logical definition,
a spine in the implication digraph can be defined
as the set of literals $x$ for which there exists
a directed path from $x$ to $\n x$.
In other terms, the spine of a formula $F$ is defined 
as the set of literals $x$ such that there exists a satisfiable
subformula
$H$ of $F$ with the property that $H$ is SAT but $H \land x$ is not satisfiable.
Consequently, when building a formula by adding random clauses
one after the other starting from the empty formula,
the spine has proven to be a useful concept for calculating
the probability of satisfiability of the formula.
Introducing this concept allowed the authors of~\cite{bollobas2001scaling}
to establish the width $n^{-1/3}$ of the critical window
of the phase transition in 2-SAT,
\ie to prove that the limit of the probability $\proba_{\SAT}(n,p)$
could only be non-trivial for $p = \frac{1}{2n}(1 + \bigO(n^{-1/3}))$.

In the current paper, we are considering the so-called
contradictory strongly connected components as the central parameter.
In an implication digraph, a variable \( x \) is contradictory if
there is a path from \( x \) to \( \n x \) and from \( \n x \) to \(
x\), and the whole strongly connected component containing a
contradictory variable is called contradictory (we will show that, in
fact, every variable of this component will also be contradictory).
On the level of logical definition,
a Boolean variable $x$ is called contradictory
if both its literals $x$ and $\n x$
belong to the spine. Equivalently, the set of contradictory variables
is defined as
\[
    C(F) = \{
        x \, | \, \exists H_1, H_2 \subset F, \, H_1, H_2 \text{ are $\SAT$ and }
        H_1 \land x \text{ and } H_2 \land \n x \text{ are $\UNSAT$}
    \}.
\]
This definition can be possibly extended to CSP models other than 2-SAT.

\subsection{Our results}
\label{sec:our:results}

\paragraph{Exact enumeration.}
In the current paper, we express the number of satisfiable 2-CNF formulae
with the help of generating functions.
It follows from~\cref{th:cnf:scc:cscc} that if \( a_{n,m} \) denotes the number
of satisfiable 2-CNF with $n$ Boolean variables and $m$ clauses, then
\( a_{n,m} \) can be encoded into a generating function identified by
the expression
\[
    \sum_{n \geq 0} \dfrac{1}{(1+w)^{n(n-1)}} \dfrac{z^n}{2^n n!}
    \sum_{m \geq 0} a_{n,m} w^m
    =
    \dfrac
    {
        \displaystyle\sum\limits_{n \geq 0}
        \frac{1}{(1+w)^{n(n-1)}} \frac{z^n}{2^n n!}
        \odot_z
        e^{-\SCC(2z, w)/2}
    }{
        \displaystyle\sum\limits_{n \geq 0}
        \frac{1}{(1+w)^{{n \choose 2}}} \frac{z^n}{n!}
        \odot_z
        e^{-\SCC(z, w)}
    }\, ,
\]
where \( \odot_z \) denotes the exponential Hadamard product
\[
    \sum_{n \geq 0} a_n(w) \dfrac{z^n}{n!}
    \odot_z
    \sum_{n \geq 0} b_n(w) \dfrac{z^n}{n!}
    :=
    \sum_{n \geq 0} a_n(w) b_n(w) \dfrac{z^n}{n!}\, , 
\]
and \( \SCC(z, w) \) is the Exponential generating function of strongly connected digraphs (see ~\cite[Corollary 3.5]{de2019symbolic} or \cref{eq:scc} below).
Note that negative signs in formal generating functions
can often be interpreted as an application of the inclusion-exclusion
principle, as explained by \cite[Lemma 2.2.29]{goulden2004combinatorial}
(see also \cite[III. 7.4, p.~206]{flajolet2009analytic}).
In our case, the inclusion-exclusion manifests itself in the negative exponential terms, and
the corresponding statistical patterns inside a random implication digraph are the number
of the so-called contradictory and ordinary strongly connected components.

We obtain two exact expressions for the number of satisfiable 2-SAT formulae
(\cref{th:satisfiable} and \cref{th:variant:satisfiable}),
the number of unsatisfiable 2-SAT formulae
whose implication digraph is strongly connected
(\cref{th:CSCC})
as well as a description of the structure of the implication digraphs
associated to 2-SAT formulae (\cref{th:detailed:CNF}): the latter result describes
the implication digraphs with given allowed strongly connected components.
Those results are based on generating function manipulations.
In this paper, we introduce a new type of generating function,
called an \emph{Implication generating function}.
It is inspired by the \emph{Special} or \emph{Graphic} generating function
introduced in \cite{Robinson71,Gessel95}.
The product of an Implication generating function
with a Graphic generating function corresponds to a combinatorial operation
involving a digraph and a 2-SAT formula,
that we call hereafter an \emph{implication product}.

\paragraph{Phase transition.}
Note that Deroulers and Monasson~\cite{Deroulers} gave numerical estimates of probabilities of these formula being satisfiable around their phase transition.
Up to $n=5 \times 10^6$,
they were able to determine the empirical values of
\[
\proba\left[ \mbox{Random $2$-SAT formula built with $n$ variables and $n$ clauses is SAT} \right]
\]
using Monte-Carlo simulation, and gave a prediction of
\(
    \proba_{\SAT, \infty}(0) = 0.907 \pm 10^{-3}.
\)
Our results translate into more efficient algorithms
to accurately but non-rigorously predict those empirical values.
We improve their prediction to
\(
    \proba_{\SAT, \infty}(0) = 0.90622396067 \pm 10^{-11},
\)
and also make prediction
for $\proba_{\SAT, \infty}(\mu)$ for other values of $\mu$,
plotting the phase transition curve of the 2-SAT inside the critical window, viz.~\cref{fig:limiting:plot}.
For the sake of reproducibility, we share the \texttt{ipython} notebooks used for
computation of these values by a hopefully permanent public URL on \texttt{GitLab}
\begin{center}
\url{https://gitlab.com/sergey-dovgal/enumeration-2sat-aux}
\end{center}
Other researchers are encouraged to reuse them if they wish so.
Given the similarity between the exact expressions counting
digraph families and satisfiable 2-CNF,
we hope that the analytic tools for analysis of the phase transition of digraphs
similar to the ones developed by~\cite{DigraphWindow} 
can be extended to 2-CNF and express $\proba_{\SAT, \infty}(\mu)$
in a closed form.
We expect this curve to have an expression akin to
the integrals of Airy functions such as those encountered
in~\cite{DigraphWindow}, or some form of generalized Airy function
(see~\cite{janson1993birth,de2015phase}).

\begin{figure}[hbt!]
    \begin{center}
        \includegraphics[width=0.7\textwidth]{./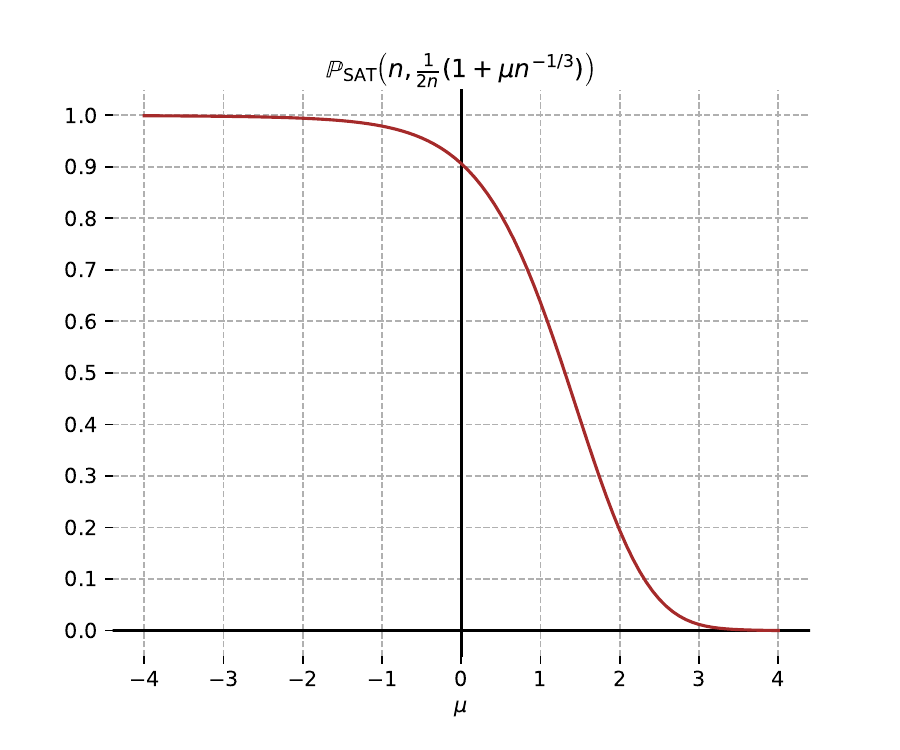}
    \end{center}
    \caption{\label{fig:limiting:plot} The predicted limiting probability that a formula is satisfiable inside the critical window of the 2-SAT phase transition, in the model
    \( (n, p) \), where $n$ denotes the number of Boolean variables, and $p$ is the clause
    probability, $p = \frac{1}{2n}(1 + \mu n^{-1/3})$.}
\end{figure}

    \subsection*{Outline of the paper}

We recall the classic definitions of 2-SAT formulae
as well as the characterization of satisfiable formulae
and the structure of the associated implication digraphs
in \cref{sec:conjunctive:normal:forms}.
The various types of generating functions used throughout this article
are introduced in \cref{sec:generating:functions}.
Then, \cref{sec:counting:sat:families}
presents our results and their proofs.
Finally, in~\cref{sec:numerical} we provide the first several terms of
the counting sequences for some 2-SAT families along with accurate
numerical predictions related to the satisfiability phase transition.

\section{Conjunctive Normal Forms and implication digraphs}
\label{sec:conjunctive:normal:forms}

\subsection{Definitions and notation} \label{sec:definitions:notations}
In this section, we are using the classical binary Boolean operators $\lor,
\land$ and $\to$ which correspond respectively to \emph{disjunction},
\emph{conjunction} and \emph{implication}.
We are also using the unary operator $\neg x$ or $\n x$ to denote negation.

\begin{definition}
The \emph{literals} of a Boolean variable $x$ are $x$ and its negation $\n x$.
A \emph{Conjunctive Normal Form} (CNF) formula
on $n$ variables is a set (conjunction) of clauses,
where each clause is a disjunction of literals
corresponding to distinct variables.
\end{definition}

For example, the formula (viz.~\cref{fig:cnf:example})
\begin{equation} \label{eq:two:sat:example}
    (x_1 \vee x_3) \land (\n x_2 \vee x_1) \land (x_2 \vee \n x_3)
\end{equation}
is considered to be the same CNF as
\[
    (x_1 \vee \n x_2) \land (x_2 \vee \n x_3) \land (x_3 \vee x_1) 
    .
\]
Throughout this work, we do not consider formulae with literal or clause duplications, such as
\begin{align*}
    (x_1 \vee \n x_1) &\land (x_2 \vee x_1),\\
    (x_1 \vee \n x_2) &\land (\n x_2 \vee x_1).
\end{align*}

\begin{definition}
A CNF is \emph{satisfiable} if there exists
an assignment of Boolean values to the variables
that satisfies each clause.
A \emph{2-CNF} (or \emph{2-SAT formula})
is a CNF where each clause contains exactly two (distinct) literals.
\end{definition}

We also define a second type of Boolean formulae,
as an intermediate step between $2$-CNF and directed graphs (digraphs).

\begin{definition}
An \emph{implication formula} on $n$ variables is a set of clauses,
where each clause is the implication of two literals
corresponding to distinct variables.
\end{definition}

\begin{figure}[hbt]
\begin{minipage}[t]{.48\textwidth}
    \begin{center}
        \begin{tikzpicture}[>=stealth',thick, node distance=1.5cm] 
  \hpoint{1}{1}{.25}{+0.5};
  \point{2}{-1} {1}{-0.5};
  \point{3}{-1}{-1}{-0.5};
  \path (1a) edge [bblue,very thick] (2b);
  \path (2a) edge [bblue,very thick] (3b);
  \path (3a) edge [bblue,very thick] (1a);
\end{tikzpicture}
    \end{center}
    \caption{\label{fig:cnf:example} An example of a 2-CNF depicted in a
    form of a labeled graph with decorations.
    Literals \( x_k \) and \( \n x_k \) are replaced with \( k \)
    and \( \n k \) for brevity.
    }
\end{minipage}\hfill%
\begin{minipage}[t]{.48\textwidth}
    \begin{center}
        \begin{tikzpicture}[>=stealth',thick, node distance=1.0cm] 
\draw
node[arn_n](1) at (0, 0) {\( 1\)}
node[arn_n](2) at (0, -1) {\( 2 \)}
node[arn_n](3) at (0, -2) {\( 3 \)}
node[arn_r](1') at (2, 0) {\( \fn 1 \)}
node[arn_r](2') at (2, -1) {\( \fn 2 \)}
node[arn_r](3') at (2, -2) {\( \fn 3 \)}
;
\fromto{1'}{2'}[][][-30][-10]
\fromto{2}{1}[][][-30][-10]
\fromto{2'}{3'}[][][-30][-10]
\fromto{3}{2}[][][-30][-10]
\fromto{3'}{1}[][][4]
\fromto{1'}{3}[][][4]
\end{tikzpicture}
    \end{center}
    \caption{\label{fig:implication:example} The corresponding implication digraph.
    Literals \( x_k \) and \( \n x_k \) are replaced with \( k \) and
    \( \n k \) for brevity.
    }
\end{minipage}
\end{figure}

Since the clause $x \vee y$ is equivalent with either of the clauses
\( (\n x \to y) \), \( (\n y \to x) \), and with their conjunction
$(\n x \to y) \land (\n y \to x)$,
any 2-CNF has an equivalent implication formula.
Reciprocally, any implication formula where each clause $x \to y$
has its symmetric $\n y \to \n x$ also
corresponds to a valid 2-CNF.
For example (viz.~\cref{fig:implication:example}),
the implication formula corresponding to \eqref{eq:two:sat:example} is
\[
    (\n x_1 \to x_3) \land (\n x_3 \to x_1) \land
    (x_2 \to x_1) \land (\n x_1 \to \n x_2) \land
    (\n x_2 \to \n x_3) \land (x_3 \to x_2).
\]
Therefore, a 2-CNF is satisfiable if and only if
the corresponding implication formula is satisfiable.

To each 2-CNF on $n$ variables with $m$ clauses, there corresponds
an \emph{implication digraph} with $2n$ vertices and $2m$ arcs:
each clause $(k, \ell)$ corresponds to the arcs
$(\n k, \ell)$ and $(\n \ell, k)$.
Since a clause cannot contain twice the same variable,
an implication digraph contains neither loops
nor arcs from a literal to its negation.
Since a 2-CNF does not contain twice the same clause,
there is at most one arc between any two literals.

\begin{definition} \label{def:SCCs}
A \emph{contradictory variable} in a 2-CNF
is a variable $x$ such that the implication digraph contains
oriented paths from $x$ to $\n x$ and from $\n x$ to $x$.
A \emph{strongly connected component} (SCC) of a digraph
is a set of vertices, maximal for the inclusion with respect to the property
that an oriented path exists between any two vertices from the set.
In an implication digraph, a \emph{contradictory strongly connected component}
(contradictory SCC) is an SCC that contains a contradictory variable.
An SCC that is not contradictory is \emph{ordinary}.
\end{definition}

\begin{definition}
An SCC of a digraph is \emph{source-like}
if there is no arc pointing to any of its vertices
from a vertex outside of it.
It is \emph{sink-like}
if there is  no arc pointing from any of its vertices
to a vertex outside of it.
It is \emph{isolated} if it is both source-like and sink-like.
\end{definition}

\begin{definition}
Consider a digraph \( D \) whose vertices form a subset of literals
\( \{x_1, \ldots, x_n, \n x_1, \ldots, \n x_n\} \).
The \emph{negation} \( \n D \) of \( D \) is formed
by replacing its vertex labels with their negations
and flipping the edge directions:
if the original digraph \( D \) contains an arc \( x \to y \),
then its negated digraph contains an arc \( \n y \to \n x \) instead.
\end{definition}

\begin{figure}[hbt!]
\begin{minipage}[t]{.4\textwidth}
\begin{center}
    \raisebox{.3cm}{\scalebox{1}{\begin{tikzpicture}[>=stealth',thick, node distance=1.0cm]
    \draw
    node[arnBleuGrande](a) at (0,-2) {$A$}
    node[arnBleuGrande](a') at (0,0) {$\n A$}
    node[arnOrangeGrande](xx') at (1,-1) {$X$}
    node[arnOrangeGrande](yy') at (4,-1) {$Y$}
    node[arnVertGrande](b) at (2.5,0) {$B$}
    node[arnVertGrande](b') at (2.5,-2) {$\n B$}
    node[arnVioletGrande](c) at (5,0) {$C$}
    node[arnVioletGrande](c') at (5,-2) {$\n C$}
    ;
    \aprod{a}{xx'};
    \aprod{xx'}{b};
    \aprod{yy'}{b};
    \aprod{b'}{xx'};
    \aprod{b'}{yy'};
    \aprod{xx'}{a'};
    \path (a) edge [blackred,thick, bend left=35,
    decoration={markings,mark=at position 0.99 with
    {\arrow[ultra thick,blackred, rotate=0]{>}}}, postaction={decorate}
    ] node {} (a');
\end{tikzpicture}}}

    \caption{\label{fig:condensation}A condensation of 2-CNF implication digraph. Each component is depicted as a node. An arrow from component $D$ to component $E$ means that there is at least one edge $u \rightarrow v$ in the implication digraph with the literal $u$ in $D$ and the literal $v$ in $E$.}
\end{center}
\end{minipage}\hfill%
\begin{minipage}[t]{.25\textwidth}
\begin{center}
    \raisebox{.05cm}{\scalebox{1.1}{\begin{tikzpicture}[>=stealth',thick, node distance=1.0cm] 
\draw
node[arn_n](1) at (0, 0) {\( 1\)}
node[arn_r](2') at (1.41, 1)  {\( \fn 2 \)}
node[arn_n](3) at (1.41, -.8)  {\( 3 \)}
;
\fromto{1}{2'}[][][-4]
\fromto{2'}{3}[][][-4]
\fromto{3}{1}[][][-4]
\node[rectangle,dashed,draw,fit=(1)(2')(3), very thick,
      rounded corners=5mm,inner sep= 5pt, bgreen] {};
\end{tikzpicture}}}

    \caption{\label{fig:oscc}An ordinary component.}
\end{center}
\end{minipage}\hfill%
\begin{minipage}[t]{.25\textwidth}
\begin{center}
    \scalebox{0.7}{\begin{tikzpicture}[>=stealth',thick, node distance=1.0cm] 
\draw
node[arn_n](1)  {\( 1\)}
node[arn_n](2)  [below of=1] {\( 2 \)}
node[arn_n](3)  [below of=2, node distance=.85cm] {\( 3 \)}
node[arn_n](4)  [below of=3] {\( 4 \)}
node[arn_r](1') [right of=1, node distance=2cm] {\( \fn 1 \)}
node[arn_r](2') [right of=2, node distance=2cm] {\( \fn 2 \)}
node[arn_r](3') [right of=3, node distance=2cm] {\( \fn 3 \)}
node[arn_r](4') [right of=4, node distance=2cm] {\( \fn 4 \)}
;
\fromto{1}{2'}[][][4]
\fromto{2}{1'}[][][4]
\fromto{3'}{4}[][][4]
\fromto{4'}{3}[][][4]
\fromto{1}{2}[][][ 30][10]
\fromto{2'}{1'}[][][ 30][10]
\fromto{1'}{3'}[][][-50][-5]
\fromto{3}{1}[][][-50][-5]
\fromto{4}{3}[][][-60][-15]
\fromto{3'}{4'}[][][-60][-15]
\node[rectangle,dashed,draw,fit=(1)(1')(4)(4'), very thick,
      rounded corners=5mm,inner sep= 13pt, bred] {};
\end{tikzpicture}}

    \caption{\label{fig:cscc}A contradictory component.}
\end{center}
\end{minipage}
\end{figure}

In~\cref{fig:condensation} we provide an example,
where an implication digraph of a 2-CNF formula
is depicted in a condensated form, the components \( X = \n X \)
and \( Y = \n Y \) are contradictory SCCs;
\( A \), \( \n B \), \( C \) and \( \n C \)
are \emph{ordinary} source-like SCCs;
\( \n A \), \( B \), \( C \) and \( \n C \)
are \emph{ordinary} sink-like SCCs; and finally,
\( C \) and \( \n C \)
are \emph{ordinary isolated} SCCs.
\Cref{fig:oscc,fig:cscc} provide examples of an ordinary and a
contradictory component.

\subsection{Structural properties of 2-Conjunctive Normal Forms}
\label{sec:structural:properties:cnf}

The first linear time algorithm to decide
the satisfiability of a 2-SAT formula
was designed by Aspvall, Plass and Tarjan
\cite{APT79}.
It relied on a characterization
of the implication digraphs
of satisfiable 2-SAT formulae.
In this section, we recall their proof,
reformulating it to fit our needs.

We start with properties of the contradictory SCCs of implication digraphs.

\begin{proposition} \label{th:structure:CSCC}
Let \( C \) be a contradictory SCC. Then,
\begin{enumerate}
    \item All variables appearing in \( C \) are contradictory;
    \item If \( C \) is source-like (or sink-like) then it is isolated;
    \item If \( C' \) is another contradictory SCC, then there is no oriented path starting in \( C \) and ending in \( C' \).
\end{enumerate}
\end{proposition}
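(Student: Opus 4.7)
The unifying tool for all three parts is the negation symmetry of implication digraphs: since every clause $(k,\ell)$ contributes both arcs $\n k \to \ell$ and $\n \ell \to k$, the map $x \mapsto \n x$ together with reversal of arcs is an automorphism of the digraph. In particular, for every directed path $u \to v_1 \to \cdots \to v_r \to v$ the reversed negated path $\n v \to \n v_r \to \cdots \to \n v_1 \to \n u$ is also present. I would prove all three claims by systematic use of this observation.

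For (1), let $v$ be a contradictory variable with $v, \n v \in C$, and let $u$ be any literal in $C$. Strong connectivity of $C$ gives directed paths $u \to v$ and $v \to u$, so by the symmetry above the digraph also contains paths $\n v \to \n u$ and $\n u \to \n v$. Hence $\n u$ lies in the same SCC as $\n v$, which is $C$. Thus $u$ and $\n u$ both lie in $C$, and $u$ is contradictory.

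For (2), assume $C$ is source-like but not sink-like, so there exists an arc $x \to y$ with $x \in C$ and $y \notin C$. By the symmetry there is an arc $\n y \to \n x$, and by (1) we have $\n x \in C$. Since $C$ is source-like this forces $\n y \in C$, and applying (1) again yields $y \in C$, a contradiction. The argument with source-like and sink-like exchanged is identical, so a source-like or sink-like contradictory SCC must be isolated.

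For (3), suppose $C$ and $C'$ are distinct contradictory SCCs with a directed path $P$ from some $x \in C$ to some $y \in C'$. The negation symmetry produces a directed path $\n P$ from $\n y$ to $\n x$, and by (1) we have $\n y \in C'$ and $\n x \in C$. Hence $C'$ reaches $C$, so combined with $P$ this puts $C$ and $C'$ in the same SCC, contradicting $C \neq C'$.

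No step presents a genuine obstacle once the negation symmetry has been stated as a lemma on paths; the only care needed is to apply (1) before invoking the source-like/sink-like hypothesis in (2), and to remember in (3) that the reversed path exists in the whole digraph (possibly using vertices outside $C \cup C'$) but that this is enough to merge the two SCCs.
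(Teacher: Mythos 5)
Your proof is correct and follows essentially the same route as the paper: all three parts are derived from the negation symmetry of implication digraphs (every path $u \to v$ yields a path $\n v \to \n u$), with part (1) established first and then invoked to place $\n x$ inside and $\n y$ outside $C$ in parts (2) and (3). The only difference is cosmetic phrasing (you argue via membership of $\n u$ in the SCC, the paper concatenates explicit paths $k \to x \to \n x \to \n k$), so nothing further is needed.
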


\begin{proof}
  If $C$ is reduced to one variable, the assertion is trivial.
  So, let $C$ be a contradictory SCC that contains a
 contradictory variable $x$ and a literal $k$.
Then $C$ contains oriented paths
from $x$ to $\n x$,
from $\n x$ to $x$,
from $x$ to $k$ and
from $k$ to $x$.
By symmetry of the arcs,
this implies the existence of oriented paths from $\n k$ to $\n x$
and $\n x$ to $\n k$.
Combining them, we obtain oriented paths from $k$ to $\n k$ and $\n k$ to $k$.
Thus, $k$ is also a contradictory variable.

For the second part, suppose that \( C \) is source-like.
If there were an arc from a literal $k \in C$
to a literal $\ell$ outside of $C$,
then by symmetry we would also have the arc $(\n \ell, \n k)$.
By the previous result, $\n \ell$ does not belong to $C$, but $\n k$ does.
Thus, $C$ would not be source-like, which leads to a contradiction.

Let us prove the last part of the proposition.
By symmetry, an oriented path from a literal $k$
of the contradictory SCC $C$
to a literal $\ell$ of the contradictory SCC $C'$
implies an oriented path from $\n \ell$ to $\n k$,
hence from $C'$ to $C$.
Thus, $C$ and $C'$ are the same contradictory SCC.
\end{proof}

We now turn to ordinary source-like SCC.

\begin{proposition} \label{th:ordinary:SCC:symmetric}
Let $C$ denote a source-like (\resp sink-like, \resp isolated)
ordinary SCC from the implication digraph,
then the component $\n C$
is a sink-like (\resp source-like, \resp isolated) SCC which is disjoint from $C$.
\end{proposition}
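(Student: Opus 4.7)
The plan is to exploit the defining symmetry of the implication digraph, namely that an arc $x \to y$ is present if and only if the arc $\n y \to \n x$ is present. Iterating this, any directed path $x_0 \to x_1 \to \cdots \to x_r$ gives rise to a directed path $\n x_r \to \n x_{r-1} \to \cdots \to \n x_0$. All three cases of the proposition will follow by repeatedly applying this reversal-negation principle to the hypotheses on $C$.

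First I would prove that $\n C$ is an SCC. For strong connectedness, given any $\n k, \n \ell \in \n C$, the SCC $C$ contains directed paths $k \to \cdots \to \ell$ and $\ell \to \cdots \to k$, which negate and reverse into paths $\n \ell \to \cdots \to \n k$ and $\n k \to \cdots \to \n \ell$ inside $\n C$. For maximality, suppose that some literal $m$ is mutually reachable with some $\n k \in \n C$; the same reversal-negation principle makes $\n m$ mutually reachable with $k \in C$, so $\n m$ lies in the SCC of $k$, which is $C$, giving $m \in \n C$. Disjointness $C \cap \n C = \emptyset$ is the step where the ordinary hypothesis enters: a common literal $k \in C \cap \n C$ would force both $k$ and $\n k$ to lie in $C$, so $k$ would be a contradictory variable contained in $C$, contradicting ordinariness.

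Finally, for the source-like/sink-like/isolated correspondence I would argue by contraposition. Assume $C$ is source-like and suppose, for contradiction, that there is an arc $\n k \to m$ with $\n k \in \n C$ and $m \notin \n C$. The symmetric arc $\n m \to k$ then enters $C$, because $k \in C$ while $\n m \notin C$ (as $m \notin \n C$), contradicting source-likeness; hence $\n C$ is sink-like. The sink-like case is strictly parallel, and the isolated case is the conjunction of the two. There is no real obstacle in this argument — it is essentially a bookkeeping exercise around the arc-negation symmetry — the only point that deserves scrutiny is the disjointness step, since it is the single place where \emph{ordinary} is actually used; every other assertion would go through unchanged for contradictory SCCs.
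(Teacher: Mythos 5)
Your proposal is correct and follows essentially the same route as the paper: use the arc symmetry $x \to y \iff \n y \to \n x$ to transport paths, conclude that $\n C$ is an SCC, derive disjointness from the ordinariness of $C$, and obtain the sink-like property by contradiction via the symmetric arc. Your write-up is somewhat more explicit than the paper's (notably the maximality of $\n C$ and the isolation of where ordinariness is used), but there is no substantive difference in approach.
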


\begin{proof}
It is sufficient to prove that if $C$ is ordinary and source-like,
then $\n C$ is a sink-like SCC.
Since each arc $(k, \ell)$ has its symmetric $(\n \ell, \n k)$,
any oriented path from $k$ to $\ell$ has as symmetric
an oriented path from $\n \ell$ to $\n k$.
Thus, the negation of the literals from $C$ also form an SCC $\n C$.
It is disjoint from $C$ because $C$ is not contradictory.
Furthermore, any arc pointing from $\n C$ to a literal $k$ outside $\n C$
would have its symmetric pointing to $C$ from $\n k$.
The literal $\n k$ could not belong to $C$, otherwise $k$ would be in $\n C$.
Thus, $C$ would not be source-like: a contradiction.
It follows that such a $k$ cannot exist and that $\n C$ is sink-like.
\end{proof}

We finally arrive at the classic characterization
of satisfiable 2-CNF.

\begin{proposition}
A 2-CNF is satisfiable if and only if
it contains no contradictory variable.
\end{proposition}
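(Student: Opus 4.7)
The equivalence splits into two directions. The forward direction is easy: if some variable $x$ is contradictory, the implication digraph contains oriented paths $x \Rightarrow \n x$ and $\n x \Rightarrow x$, whose arcs are all implication clauses of the formula. A short induction on path length shows that any assignment satisfying every clause must propagate a True value along any oriented path, and hence would force $x = \n x$, which is impossible. So the forward direction follows with no real work.

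For the backward direction I propose the standard condensation-based argument (in the spirit of Aspvall--Plass--Tarjan). The hypothesis says that for every variable $x$, the literals $x$ and $\n x$ lie in distinct SCCs. I would fix an arbitrary topological ordering $\prec$ of the SCCs of the implication digraph and assign $x$ the value True whenever the SCC of $x$ comes later than the SCC of $\n x$ in this ordering, and False otherwise. The key structural fact needed is that negation reverses the entire condensation DAG: for every SCC $C$ the set $\n C$ is an SCC distinct from $C$, and every arc $C \to C'$ in the condensation is mirrored by an arc $\n{C'} \to \n C$. Both statements are immediate generalizations of \cref{th:ordinary:SCC:symmetric} obtained by the same symmetric-arc argument (a path from $k$ to $\ell$ has a symmetric path from $\n \ell$ to $\n k$).

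Once this is granted, verifying satisfiability is a direct chase through the ordering: for any clause represented by an arc $a \to b$ with $a$ set to True, one has $\text{SCC}(\n a) \prec \text{SCC}(a) \preceq \text{SCC}(b)$, and the symmetric arc $\n b \to \n a$ gives $\text{SCC}(\n b) \preceq \text{SCC}(\n a)$; combining yields $\text{SCC}(\n b) \prec \text{SCC}(b)$, so $b$ is True and the clause is satisfied. The only step requiring any care is the extension of \cref{th:ordinary:SCC:symmetric} from source-like SCCs to arbitrary SCCs described above; I view this as the single conceptual obstacle, and once it is in place the rest of the argument is routine.
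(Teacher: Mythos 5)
Your proof is correct, but your backward direction takes a genuinely different route from the paper's. You use the classical Aspvall--Plass--Tarjan construction: a single global assignment read off from a topological ordering of the condensation, justified by the fact that negation is an arc-reversing involution of the whole condensation DAG. The paper instead argues by induction on the number of variables: it picks one source-like (necessarily ordinary) SCC $C$, invokes \cref{th:ordinary:SCC:symmetric} to get that $\n C$ is sink-like, assigns $C$ the value False and $\n C$ the value True, deletes these variables, and applies the induction hypothesis to the remaining formula; the removed arcs are all of the form $\False \Rightarrow x$ or $x \Rightarrow \True$ and hence harmless. The extension of \cref{th:ordinary:SCC:symmetric} that you flag as the one conceptual obstacle is indeed needed for your version but is essentially already contained in the paper's proof of that proposition (the symmetric-path argument there shows $\n C$ is an SCC for any ordinary SCC $C$, and the same symmetry mirrors condensation arcs), so your argument is complete. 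What each approach buys: yours produces an explicit satisfying assignment in one pass and is the standard algorithmic proof; the paper's inductive peeling is slightly more economical in that it only ever needs the source-like case of \cref{th:ordinary:SCC:symmetric}, which is the form the paper states and reuses later in its generating-function decompositions.
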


\begin{proof}
Given an implication digraph, let us write $x \rightsquigarrow y$
if there exists an oriented path
from the literal $x$ to the literal $y$.
Suppose that a formula contains a contradictory variable and is
satisfiable, which means that all variables can be assigned Boolean
values satisfying all the clauses.
If $x$ is a contradictory variable, then in the implication digraph,
the presence of an oriented path $x \rightsquigarrow \n x$
results in the logical implication $x \Rightarrow \n x$,
so $x$ must take the value $\False$.
Similarly, the oriented path $\n x \rightsquigarrow x$ implies $\n x \Rightarrow x$, so $x$ must be $\True$.
This is a contradiction, so the formula cannot be satisfiable
if it contains a contradictory variable.

We prove the reverse by induction on the number of variables.
Consider a 2-CNF without contradictory variables,
and assume that any smaller 2-CNF without contradictory variables is satisfiable.
Then, it must contain a source-like SCC.
Let $C$ be any source-like SCC of the implication digraph.
By \cref{th:ordinary:SCC:symmetric}, its symmetric $\n C$ is sink-like.
Let us set all the literals of $C$ to the Boolean value $\False$,
so the literals from $\n C$ are set to $\True$.
Consider the formula $F$ corresponding to removing the variables from $C$ and $\n C$.
It contains no contradictory variable, so it is satisfiable by induction.
We claim that any solution to $F$ satisfies the original formula.
Indeed, the arcs we removed are from $C$,
in which case they become $\False \Rightarrow x$
which is satisfied for any value of $x$,
or to $\n C$, in which case they become $x \Rightarrow \True$
which is satisfied for any value of $x$.
\end{proof}

\section{Generating functions and implication product}
\label{sec:generating:functions}

This section introduces the tools
that will be applied in \cref{sec:counting:sat:families}
for counting various $2$-SAT families.
First, let us recall the general definition of generating functions
and give a brief overview of the plan of the section.

A generating function (GF) associated to a family $\mA$
is a formal series of the form
\[
    A(z) = \sum_{a \in \mA} \kappa_{|a|}(z),
\]
where $z$ is a formal variable,
$|a|$ is a non-negative integer number computed from $a$
and called the \emph{size} of \( a \),
and $(\kappa_n(z))_{n \geq 0}$ is a sequence of functions.
Let $a_n$ denote the number of elements $a \in \mA$
such that $|a| = n$, and assume $a_n$ is finite for all $n \geq 0$.
Grouping the summands corresponding to the same value $|a|$,
the GF becomes
\[
    A(z) = \sum_{n \geq 0} a_n \kappa_n(z).
\]
The \emph{type} of the GF corresponds to the choice of $(\kappa_n(z))$.
The founding idea of analytic number theory~\cite{tenenbaum2015introduction},
species theory~\cite{bergeron1998combinatorial}
and analytic combinatorics~\cite{flajolet2009analytic}
is that the study of the sequence $(a_n)$ can be simplified
by the introduction of the right type of GF.

In this article, we will use several types of GFs,
defined in \cref{sec:various:types}
\begin{itemize}
\item
the classic \emph{Ordinary} and \emph{Exponential} GFs
correspond to
\[
    \kappa_n(z) = z^n
    \qquad \text{and} \qquad
    \kappa_n(z) = z^n / n!,
\]
and a good reference is \cite[Chapters I and II]{flajolet2009analytic},
\item
\emph{Graphic} (or \emph{Special}) GFs,
introduced by Robinson~\cite{Robinson71} and Gessel~\cite{Gessel95},
correspond to
\[
    \kappa_n(z) =
    \frac{1}{(1+w)^{\binom{n}{2}}}
    \frac{z^n}{n!}
\]
where $w$ denotes an additional formal variable,
\item
and a new type of GFs, which we call \emph{Implication} GFs,
that corresponds to
\[
    \kappa_n(z) =
    \frac{1}{(1+w)^{n (n-1)}}
    \frac{z^n}{2^n n!} \, .
\]
\end{itemize}
The Implication GF is designed for the enumeration
of various $2$-CNF families.
To translate GFs of a type into another type,
we will use the \emph{exponential Hadamard product},
introduced by Joyal (see \cref{rem:hadamard:history}).
It is presented in \cref{sec:hadamard}.
The central idea behind the use of GFs
is that combinatorial operations on the families
translate into analytic operations on their GFs.
The type of GF used depends
on the combinatorial operation of interest.
\cref{sec:combinatorial:operations} presents
those operations for the various types used in this paper.

    \subsection{Types of generating functions} \label{sec:various:types}

We present the various types of GFs used in this paper
in the specific context of graphs, digraphs and implication digraphs.
General introductions to Ordinary and Exponential GFs
are available in \cite{bergeron1998combinatorial}
and \cite{flajolet2009analytic}.
The definition of Graphic (or \emph{Special}) GFs
is due to Robinson~\cite{Robinson71} and Gessel~\cite{Gessel95}.
The definition of Implication GFs is new
and will be motivated in \cref{sec:combinatorial:operations}.
In this paper we are dealing with labeled objects: for each graph or
digraph with \( n \) vertices, we consider that their vertices are
labeled with distinct labels from \( \{ 1, \ldots, n\} \)
(more on that also in~\cref{sec:combinatorial:operations}).

\begin{definition} \label{def:counting:digraphs}
Let $\mA$ be a graph or digraph family
and $\mB$ an implication digraph family.
Let $a_{n,m}$ denote the number of (di)graphs in $\mA$
with $n$ vertices and $m$ arcs (\resp edges),
and $b_{n,m}$ the number of implication digraphs in $\mB$
with $2n$ vertices and $2m$ implication arcs (\ie built from 2-CNFs
with $n$ variables and $m$ clauses).
For each $n$, let $\mA_n$ denote the subfamily of $\mA$
of (di)graphs on $n$ vertices,
and $\mB_n$ the subfamily of $\mB$
of implication digraphs on $2n$ vertices.

The \emph{Ordinary} GFs of $\mA_n$ and $\mB_n$ are defined as
\[
    a_n(w) = \sum_{m \geq 0} a_{n,m} w^m
    \qquad \text{and} \qquad
    b_n(w) = \sum_{m \geq 0} b_{n,m} w^m.
\]

The Exponential GF $A(z,w)$, Graphic GF $\widehat A(z,w)$
and Implication GF $\ddot B(z,w)$ are defined as
\[
    A(z,w) = \sum_{n \geq 0} a_n(w) \frac{z^n}{n!},
    \quad
    \widehat A(z,w) =
    \sum_{n \geq 0}
    \frac{a_n(w)}{(1+w)^{\binom{n}{2}}}
    \frac{z^n}{n!}
    \quad \text{and} \quad
    \ddot B(z,w) =
    \sum_{n \geq 0}
    \frac{b_n(w)}{(1+w)^{n(n-1)}}
    \frac{z^n}{2^n n!}.
\]
To alleviate the notations, we often omit the variable $w$,
writing $A(z)$ instead of $A(z,w)$.
As a convention, we will use hats to distinguish Graphic GFs
from Exponential GFs, and double dots to denote Implication GFs.
\end{definition}

The name \emph{Graphic} of the generating function
may be somewhat misleading:
we always use Exponential GFs to enumerate graphs,
and often use Graphic GFs to enumerate \emph{directed} graphs.
However, for historical reasons, we are keeping its original name.

The following lemma expresses the generating functions
of various families that will be used throughout this article.

\begin{lemma} \label{th:classic:gfs}
Let $G(z)$ (variable $w$ omitted)
denote the Exponential GF of all graphs with labeled vertices,
where loops and multiple edges are forbidden, then
\[
    G(z) =
    \sum_{n \geq 0}
    (1+w)^{\binom{n}{2}}
    \frac{z^n}{n!}.
\]
Let $D(z)$ denote the Exponential GF of all digraphs,
with labeled vertices,
where loops and multiple arcs are forbidden.
In this model, we assume that between any two nodes of a digraph,
both arcs connecting these nodes can be present. Then
\[
    D(z) =
    \sum_{n \geq 0}
    (1+w)^{n (n-1)}
    \frac{z^n}{n!}.
\]
Let $\gset(z)$ denote the Graphic GF of digraphs
that contain no arcs, then
\[
    \gset(z) =
    \sum_{n \geq 0}
    \frac{1}{(1+w)^{\binom{n}{2}}}
    \frac{z^n}{n!}.
\]
Let $\idset(z)$ denote the Implication GF of implication digraphs
that contain no arcs, then
\[
    \idset(z) =
    \sum_{n \geq 0}
    \frac{1}{(1+w)^{n(n-1)}}
    \frac{z^n}{2^n n!}.
\]
\end{lemma}

\begin{proof}
Consider a family $\mA$ of graphs
(\resp digraphs, \resp implication digraphs)
and let $a_{n,m}$ denote the number of its elements
with $n$ vertices (\resp $n$ vertices, \resp $2n$ vertices)
and $m$ edges (\resp $m$ arcs, \resp $2m$ arcs).
Let $a_n(w)$ denote its Ordinary GF
\[
    a_n(w) =
    \sum_{m \geq 0}
    a_{n,m} w^m.
\]
The number of graphs with $n$ vertices and $m$ edges is
$\binom{\binom{n}{2}}{m}$,
because a subset of $m$ edges is chosen
among all possible $\binom{n}{2}$ edges.
Thus, if $\mA$ is the family of all graphs,
then $a_n(w) = (1 + w)^{\binom{n}{2}}$
and the first result follows.
When $\mA$ is the family of all digraphs,
we have $a_{n,m} = \binom{n (n-1)}{m}$,
because a subset of $m$ arcs is chosen
among all possible $n (n-1)$ arcs.
Thus, $a_n(w) = (1+w)^{n (n-1)}$,
which implies the second result.
When $\mA$ is the family of digraphs without any arc,
we have $a_n(w) = 1$, because there exists only one digraph
on $n$ vertices containing no arc.
This implies the third result.
Similarly, when $\mA$ is the family of implication digraphs containing no arc,
we have again $a_n(w) = 1$.
This implies the fourth result.
\end{proof}

We choose the notations $\gset$ and $\idset$
to represent families that are just set of vertices,
without any additional structure.
Note that in all the sums of the last lemma,
$n=0$ corresponds to the empty graph, containing no vertices.

\paragraph{Probability from generating functions.}
It is handy to use generating functions to calculate the probabilities
in the $(n, p)$ model, where the number of clauses is not fixed,
but each clause is drawn independently with probability $p$
(\cf~\cite[Lemma 6]{AofA20} or \cite[Lemma 2.8]{DigraphWindow}).

\begin{proposition}
\label{proposition:proba:np}
Let \( \mF \) be some family of 2-SAT formulae,
whose Implication GF is \( \ddot{F}(z, w) \).
Then, the probability that a random formula from the \( (n,p) \)
model (\ie a random formula with $n$ Boolean variables where each of
the $2n(n-1)$ possible clauses is drawn
independently with probability $p$) belongs to $\mF$, is
\[
    \mathbb P_{n,p}(F \in \mF) = 2^n n! (1-p)^{n(n-1)}
    [z^n]\ddot{F}\left(z, \frac{p}{1-p}\right).
\]
\end{proposition}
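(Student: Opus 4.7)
The plan is to unfold the definition of the Implication generating function and match the result to the explicit formula for the $(n,p)$ probability, which is just a binomial expansion. There is no substantial obstacle; the content is bookkeeping of normalization factors.

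First, I would count the total number $N$ of possible clauses on $n$ labelled Boolean variables: a clause is an unordered pair of literals on two distinct variables, giving $N = 4\binom{n}{2} = 2n(n-1)$. In the $(n,p)$ model each of the $N$ possible clauses is drawn independently with probability $p$, so a specific formula with $m$ clauses occurs with probability $p^m(1-p)^{N-m}$. Summing over all members of $\mathcal{F}$ grouped by their number of clauses, I get
\[
\mathbb{P}_{n,p}(F \in \mathcal{F})
= \sum_{m \geq 0} f_{n,m}\, p^m (1-p)^{2n(n-1)-m},
\]
where $f_{n,m}$ counts the formulas of $\mathcal{F}$ with $n$ variables and $m$ clauses.

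Next, from \cref{def:counting:digraphs} I read off the $z^n$ coefficient of the Implication GF:
\[
[z^n]\ddot{F}(z,w) = \frac{f_n(w)}{(1+w)^{n(n-1)}\, 2^n n!},
\qquad f_n(w) = \sum_{m \geq 0} f_{n,m}\, w^m.
\]
Substituting $w = p/(1-p)$ and using the identity $1 + \tfrac{p}{1-p} = \tfrac{1}{1-p}$, the normalizing power becomes $(1+w)^{n(n-1)} = (1-p)^{-n(n-1)}$. Therefore
\[
2^n n!\,(1-p)^{n(n-1)}\, [z^n]\ddot{F}\!\left(z, \tfrac{p}{1-p}\right)
= (1-p)^{2n(n-1)} f_n\!\left(\tfrac{p}{1-p}\right).
\]

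Finally, I expand $f_n(p/(1-p)) = \sum_m f_{n,m}\, p^m (1-p)^{-m}$ and distribute the factor $(1-p)^{2n(n-1)}$, obtaining
\[
(1-p)^{2n(n-1)} f_n\!\left(\tfrac{p}{1-p}\right)
= \sum_{m \geq 0} f_{n,m}\, p^m (1-p)^{2n(n-1)-m},
\]
which matches the probability computed in the first step. The only point one needs to keep straight is that the exponent $n(n-1)$ appearing in the Implication GF normalization is exactly half of the total number $N = 2n(n-1)$ of possible clauses, so one factor $(1-p)^{n(n-1)}$ comes from the explicit prefactor in the statement while the second is produced by $(1+w)^{-n(n-1)}$ after the substitution $w = p/(1-p)$.
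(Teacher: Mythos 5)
Your proof is correct and follows essentially the same route as the paper's: both compute $\mathbb{P}_{n,p}(F \in \mathcal{F})$ as a sum of $p^{m}(1-p)^{2n(n-1)-m}$ over formulae and then match it to the Implication GF via the substitution $w = p/(1-p)$ and the normalization $(1+w)^{n(n-1)} = (1-p)^{-n(n-1)}$. The only difference is direction of presentation (you start from the GF side, the paper from the probability side), which is immaterial.
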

\begin{proof}
Let \( m(F) \) denote the number of clauses of a formula \( F \),
and let \( \mF_n \) be the set of the formulae from \( \mF \)
containing \( n \) Boolean variables.
There are \( 2n(n-1) \) possible clauses
for \( n \) Boolean variables.
The probability that a random formula \( F \) belongs to \( \mF_n \)
is expressed by summing over all possible numbers of edges
\begin{align*}
\mathbb P_{n,p}(F \in \mF)
&=
\sum_{F \in \mathcal F_n}
p^{m(F)} (1-p)^{2n(n-1)-m(F)}\\
&=(1-p)^{2n(n-1)} \sum_{F \in \mathcal F_n} \left(
\dfrac{p}{1-p}
\right)^{m(F)}
\\ &=
(1-p)^{2n(n-1)} \left. (1+w)^{n(n-1)} \right|_{w=\frac{p}{1-p}}
2^n
n! [z^n]
\ddot{F}\left(
z, \dfrac{p}{1-p}
\right)
\\ &=
2^n n! (1-p)^{n(n-1)} [z^n]
\ddot{F}\left(
z, \dfrac{p}{1-p}
\right).
\end{align*}
\end{proof}

\paragraph{Multivariate generating functions.}
For graph-like families, we use GFs with two variables:
$z$ marking the number of vertices
and $w$ the number of edges.
Additional marking variables can be introduced.
There are two ways to define the generating function
with several variables.
One way is to consider generalized counting sequences
(such as \( a_{n,k,j} \) in the case of three parameters),
and take the sum over all possible combination of indices
(\eg \( A(z, w, u) := \sum_{n,k,j \geq 0} a_{n,k,j} u^j w^k \frac{z^n}{n!} \)).
Another viewpoint is to say that the objects
inside the family do not have the same weight,
and an object receives a weight \( u^j \)
if the corresponding parameter marked by the variable \( u\)
inside this object is equal to \( j \).
In this case we return to the usual counting sequence
with one variable
\(
    \sum_{n \geq 0} a_n \frac{z^n}{n!}
\),
where \( a_n \) now denotes the total weight
of the objects of size \( n \),
which now depends on the additional marking variables.

    \subsection{Exponential Hadamard product} \label{sec:hadamard}

The \emph{exponential Hadamard product} $\odot_z$
of two formal power series \emph{with respect to the variable $z$},
is defined as
\[
    \bigg( \sum_{n \geq 0} a_n(w) \frac{z^n}{n!} \bigg)
    \odot_z
    \bigg( \sum_{n \geq 0} b_n(w) \frac{z^n}{n!} \bigg)
    =
    \sum_{n \geq 0} a_n(w) b_n(w) \frac{z^n}{n!}.
\]
In the following, all Hadamard products
are taken with respect to the variable $z$,
so we will omit its mention in the notation,
writing $\odot$ for $\odot_z$.

\begin{remark} \label{rem:hadamard:history}
The exponential Hadamard product was introduced in
\cite[Section 2.1, p. 64]{bergeron1998combinatorial}
as a \emph{Cartesian product} or just the \emph{Hadamard product}.
This notion (``\emph{cet ami oublié}'')
can be traced to the 1981 paper of Joyal
\cite[Theorem 3, equation (8)]{joyal1981combinatorial},
although Joyal does not really provide a proper definition.
To avoid confusion with the apparently more well-known
ordinary Hadamard product, we keep the word ``exponential''.
\end{remark}

The exponential Hadamard product satisfies
the following two elementary properties.
For any value $\alpha$ and series $A(z)$ and $B(z)$,
we have
\[
    A(\alpha\, z) \odot B(z) =
    A(z) \odot B(\alpha\, z)
    \qquad \text{and} \qquad
    A(z) \odot e^z = A(z).
\]

\begin{proposition}
\label{proposition:conversion}
The Graphic GF and Implication GF of sets (digraphs without arcs),
given in \cref{th:classic:gfs},
are used to convert the Exponential GF $A(z,w)$
into a Graphic GF $\widehat{A}(z,w)$
or an Implication GF $\ddot{A}(z,w)$
using the exponential Hadamard product with respect to $z$ as follows:
\[
    \widehat{A}(z,w)
    =
    A(z,w)
    \odot
    \gset(z)
    \quad \text{and} \quad
    \ddot{A}(z,w)
    =
    A(z,w)
    \odot
    \idset(z).
\]
Reversely, the Exponential GFs of graphs and digraphs,
given in \cref{th:classic:gfs},
are used to convert Graphic and Implication GFs
back to Exponential GFs as follows
\[
    A(z,w)
    =
    \widehat{A}(z,w)
    \odot
    G(z)
    \quad \text{and} \quad
    A(z,w)
    =
    \ddot{A}(z,w)
    \odot
    D(2z).
\]
\end{proposition}

\begin{proof}
We present the proof of the first equality,
the other three having similar proofs.
By definition, we have
\[
    \widehat A(z,w) =
    \sum_{n \geq 0} \dfrac{a_n(w)}{(1+w)^{\binom{n}{2}}} \dfrac{z^n}{n!}
    \qquad \text{and} \qquad
    A(z,w) = \sum_{n \geq 0} a_n(w) \frac{z^n}{n!}.
\]
Using the exponential Hadamard product
(with respect to $z$, as always in this paper),
the first expression is decomposed as
\[
    \widehat A(z,w) =
    \bigg(
    \sum_{n \geq 0} a_n(w)
    \dfrac{z^n}{n!}
    \bigg)
    \odot
    \bigg(
    \sum_{n \geq 0}
    \dfrac{1}{(1+w)^{\binom{n}{2}}}
    \dfrac{z^n}{n!}
    \bigg),
\]
where we recognize
\[
    \widehat A(z,w) =
    A(z,w) \odot \gset(z).
\]
\end{proof}

\begin{remark}
In Robinson's seminal paper~\cite{Robinson71},
a conversion operator
\[
    \Delta \left(
        \sum_{n \geq 0} a_n(w) \dfrac{z^n}{n!}
    \right)
    :=
    \sum_{n \geq 0} \dfrac{a_n(w)}{(1+w)^{\binom{n}{2}}} \dfrac{z^n}{n!}
\]
is used instead of the Hadamard product.
In~\cite{DigraphWindow} and~\cite{flajolet2004airy}
it is shown how to represent this operation
using a version of Fourier integral,
which later turns to be helpful
in the asymptotic analysis of these expressions.
However, the inverse operation \( \Delta^{-1} \)
may potentially lead to everywhere divergent series,
which still constitutes a challenge for the asymptotic analysis
of the expressions involving \( \Delta^{-1} \).
\end{remark}

    \subsection{Combinatorial operations}
    \label{sec:combinatorial:operations}

The central idea behind the use of GFs
is that combinatorial operations on the families
translate into analytic operations on their GFs.
To illustrate this concept,
let us look at the disjoint union.

Recall that the GF associated to a family $\mA$
is a formal series of the form
\[
    A(z) = \sum_{a \in \mA} \kappa_{|a|}(z),
\]
where $|a|$ is a non-negative integer number computed from $a$
and called the \emph{size} of \( a \),
and $(\kappa_n(z))_{n \geq 0}$ is a sequence of functions.
Consider two combinatorial families $\mA$ and $\mB$,
assumed disjoint, and their union $\mC = \mA \uplus \mB$.
Then, by definition,
\[
    C(z) =
    \sum_{c \in \mA \uplus \mB} \kappa_{|c|}(z)
    =
    \sum_{a \in \mA} \kappa_{|a|}(z)
    +
    \sum_{b \in \mB} \kappa_{|b|}(z)
    = A(z) + B(z).
\]
Thus, the disjoint union is translated into a sum of GFs.
This holds for all types of GFs.

The next most natural analytic operation on GFs is the product.
The type of GFs used depends on the operation on combinatorial families
that the product of GFs will translate.
In the following paragraphs,
we present this operation
for the various types of GFs used in the paper.

    \subsubsection{Exponential GFs}
    \label{sec:exponential:gf}

Excellent introductions to this topic are provided
in~\cite{flajolet2009analytic,bergeron1998combinatorial}.
We reproduce here the minimal definitions needed for this article.

\paragraph{Labels.}
There are two different frameworks for enumerating graphs:
the \emph{labeled} and \emph{unlabeled} variants
(cf.~\cite{HararyPalmer}).
In the second paradigm, the graphs
are enumerated up to automorphisms.
The purpose behind vertex labeling
is to consider the enumeration problem in its purest form,
without having to take automorphisms into account.
Consequently, the unlabeled versions of the enumerating problems
have been naturally considered as a further step,
and they require the introduction
of the cycle index series~\cite{bergeron1998combinatorial}
and more general enumeration recurrences.
For example, in their papers on digraph enumeration~\cite{liskovets17contribution,robinson1977counting},
Liskovets and Robinson have extended their recurrences
to the unlabeled case, 
which leads to more tedious computations.

All graphs and digraphs considered in this article are \emph{labeled},
meaning that if a graph $G$ contains $n$ vertices,
which we denote by $|G| = n$,
then each vertex from this graph carries a distinct integer
in the set of labels $\{1, \ldots, n\}$.
In \cref{fig:labeled:graph}, we depict a graph \( G \)
with a sequence of labels \( (1, 2, 3, 4) \).
If we replaced those labels
with \( (3, 2, 1, 4) \) or \( (1, 4, 3, 2) \),
the graph would be the same,
while replacing them with \( (2, 1, 3, 4) \)
would produce a different graph.

\begin{figure}[hbt!]
\begin{minipage}[t]{.48\textwidth}
    \begin{center}
        \begin{tikzpicture}[>=stealth',thick, node distance=1.0cm]
\draw
node[arnBleuGrande](a) at ( 0, 0)  { 1 }
node[arnBleuGrande](s) at ( 1,-1)  { 2 }
node[arnBleuGrande](d) at ( 2, 0)  { 3 }
node[arnBleuGrande](f) at ( 1, 1)  { 4 }
;
\sedge{a}{d}[][4];
\sedge{a}{f}[][4];
\sedge{s}{a}[][4];
\sedge{d}{s}[][4];
\sedge{f}{d}[][4];
\end{tikzpicture}
    \end{center}
    \caption{\label{fig:labeled:graph}Example of a labeled graph.}
\end{minipage}\hfill%
\begin{minipage}[t]{.48\textwidth}
    \begin{center}
        \begin{tikzpicture}[>=stealth',thick, node distance=1.0cm]
\draw
node[arnBleuGrande](a) at ( 0, 0)  { 2 }
node[arnBleuGrande](s) at ( 1,-1)  { 5 }
node[arnBleuGrande](d) at ( 2, 0)  { 6 }
node[arnBleuGrande](f) at ( 1, 1)  { 10 }
;
\sedge{a}{d}[][4];
\sedge{a}{f}[][4];
\sedge{s}{a}[][4];
\sedge{d}{s}[][4];
\sedge{f}{d}[][4];
\draw
node[arnOrangeGrande](z)  at (0.3 + 3,   1)  { 1 }
node[arnOrangeGrande](x)  at (0.3 + 3,   0)  { 3 }
node[arnOrangeGrande](q)  at (0.3 + 3.8, 0)  { 4 }
node[arnOrangeGrande](w)  at (0.3 + 3,  -1)  { 7 }
node[arnOrangeGrande](e)  at (0.3 + 4.8,-1)  { 8 }
node[arnOrangeGrande](r)  at (0.3 + 4.8, 0)  { 9 }
node[arnOrangeGrande](t)  at (0.3 + 3.8, 1)  { 11 }
;
\sedge{q}{w}[][4];
\sedge{q}{r}[][4];
\sedge{e}{r}[][4];
\sedge{r}{t}[][4];
\sedge{t}{q}[][4];
\sedge{z}{x}[][4];
\node[rectangle,dashed,draw,fit=(a)(s)(d)(f), very thick,
      rounded corners=5mm,inner sep= 5pt, bgreen] {};
\node[rectangle,dashed,draw,fit=(q)(w)(e)(r)(t), very thick,
      rounded corners=5mm,inner sep= 5pt, bgreen] {};
\end{tikzpicture}
    \end{center}
    \caption{\label{fig:cartesian:product}An element from the labeled product of two 
    certain graph families.}
\end{minipage}
\end{figure}

\paragraph{Labeled product.}
Although they simplify enumeration,
labels introduce the following difficulty.
A pair of labeled graphs is not a labeled object:
indeed, unless one of the graphs has no vertices,
the pair will contain two vertices with label $1$.
This contradicts the requirement that labels are distinct.
There is a classic solution
to solve this issue~\cite[Chapter~II]{flajolet2009analytic}.
As we are going to extend the scheme later
in~\cref{definition:implication:product}, let us recall it here for
completeness.

When forming a pair of labeled (di)graphs \( A \) and \( B \), we introduce
the following \emph{relabeling} scheme:
\begin{itemize}
    \item Assume that \( A \) has \( k \) vertices, and that the
        disjoint union of \( A \) and \( B \) has \( n \) vertices (\ie
        \( B \) has \( n-k \) vertices);
    \item An arbitrary partition \( \sigma_A \uplus \sigma_B = \{ 1,
        \ldots, n \} \), \( |\sigma_A| = k \), \( |\sigma_B| = n-k \)
        is chosen;
    \item The nodes of the graphs \( A \) and \( B \) receive,
        respectively, the labels from \( \sigma_A \) and \( \sigma_B \),
        preserving the relative ordering of the labels within \( A \)
        and \( B \).
\end{itemize}

For example, the left graph from~\cref{fig:cartesian:product}
is a relabeling of the graph from~\cref{fig:labeled:graph}.
The \emph{labeled product} $\mC$
of two labeled families $\mA$ and $\mB$
then contains, for all $a \in \mA$ and $b \in \mB$,
the pairs of relabeled elements $(a', b')$
such that the labels of the pair are
$\{1, 2, \ldots, |a| + |b|\}$
(so the pair is properly labeled).
In \cref{fig:cartesian:product} we depict an element
of the labeled product of two graphs
which carries \( 11 \) labels on its vertices.
If we imagine that the first and the second graphs
belong to some hypothetical families \( \mA \) and \( \mB \),
then, inside one of the resulting relabeled pairs,
the first graph from this pair receives labels
\( (2, 5, 6, 10) \), and the second one receives the remaining ones.
These labels are then arranged in an increasing order
to replace the original ones.
Strictly speaking, the resulting object is not a graph:
its vertices are partitioned into two sets, which,
informally speaking, correspond to a graph
with a marked subset of vertices.

The Exponential GF has been designed precisely
to capture the labeled product as an algebraic operation.
Specifically, let $\mC$ denote
the labeled product of $\mA$ and $\mB$,
and let $c_n$, $a_n$ and $b_n$ denote
the respective number of objects of size $n$.
Following the construction, we obtain
\[
    c_n =
    \sum_{k=0}^n
    \binom{n}{k} a_k b_{n-k}.
\]
Let $C(z)$, $A(z)$ and $B(z)$ denote
the associated Exponential GFs, then
\begin{align*}
    C(z) &=
    \sum_{n \geq 0} c_n \frac{z^n}{n!}
    =
    \sum_{n \geq 0}
    \sum_{k=0}^n
    \binom{n}{k} a_k b_{n-k} \frac{z^n}{n!}
    \\&=
    \sum_{n \geq 0}
    \sum_{k=0}^n
    a_k \frac{z^k}{k!}
    b_{n-k} \frac{z^{n-k}}{(n-k)!}
    =
    \bigg( \sum_{k \geq 0} a_k \frac{z^k}{k!} \bigg)
    \bigg( \sum_{k \geq 0} b_k \frac{z^k}{k!} \bigg)
    =
    A(z) B(z).
\end{align*}
Thus, the Exponential GF of the labeled product
of two families is equal to the product of their Exponential GFs.

\paragraph{Other operations.}
The definition of relabeling extends naturally
to more than two objects.
Consider a labeled family $\mA$
and the family $\mB$ obtained by taking the labeled product
of $\mA$ with itself $k$ times.
Thus, $\mB$ contains sequences of $k$ relabeled objects from $\mA$.
Then $B(z) = A(z)^k$.
Now let us identify two such sequences
if one can be obtained from the other
by changing the order of its elements.
This corresponds to considering
sets of $k$ elements from $\mA$ instead of sequences.
For each sequence, there are $k!$ corresponding sets.
Let $\mC$ denote the family containing
the sets of $k$ (relabeled) elements from $\mA$,
this implies that its Exponential GF $C(z)$
satisfies $A(z)^k = k! C(z)$, so
\[
    C(z) = \frac{A(z)^k}{k!}.
\]
Let $\mD$ denote the family containing all sets
of relabeled elements from $\mA$.
This is the disjoint union
of sets of $k$ elements, for $k \geq 0$.
Since the disjoint union translates into a sum,
we deduce that the Exponential GF of $\mD$ is equal to
\[
    D(z) =
    \sum_{k \geq 0}
    \frac{A(z)^k}{k!} = e^{A(z)}.
\]
Thus, the combinatorial operation \emph{set} is translated,
in the generating functions, by the exponential.

The following classical result illustrates
the power of those simple constructions.

\begin{proposition}
Let $G(z) = \sum_{n \geq 0} (1+w)^{\binom{n}{2}} \frac{z^n}{n!}$
(variable $w$ omitted)
denote the Exponential GF of all graphs
(see \cref{th:classic:gfs}),
then the Exponential GF of \emph{connected graphs} is
\[
    C(z) = \log(G(z)).
\]
\end{proposition}

\begin{proof}
Since a graph is a set of connected components,
their Exponential GFs are linked by the relation
\[
    G(z) = e^{C(z)}.
\]
Inverting this relation gives the announced result.
\end{proof}

    \subsubsection{Graphic GFs} \label{sec:graphic:gf}

The \emph{arrow product} (see \cite{DigraphWindow,de2019symbolic})
$\mC$ of two digraph families \( \mA \) and \( \mB \)
consists of all ordered relabeled digraph pairs
\( (A, B) \) from the labeled product of \( \mA \) and \( \mB \),
equipped with any additional subset of arcs from \( A \) to \( B \).
Let $a_n(w)$, $b_n(w)$, $c_n(w)$
denote the Ordinary GFs associated to the families $\mA$, $\mB$, $\mC$
as in \cref{def:counting:digraphs},
then this construction implies
\[
    c_n(w) =
    \sum_{k=0}^n
    \binom{n}{k}
    (1+w)^{k (n-k)}
    a_k(w) b_{n-k}(w).
\]
The factor $\binom{n}{k}$ comes from the possible relabelings
and the factor $(1+w)^{k (n-k)}$ accounts
for the possible arcs added from
a digraph $A$ (with $k$ vertices) to
a digraph $B$ (with $n-k$ vertices).

The Graphic GFs (introduced by Robinson \cite{Robinson71}
and further refined by Gessel \cite{Gessel95}) have been designed
to capture this convolution rule.
Indeed, denoting by $\widehat A(z,w)$, $\widehat B(z,w)$, $\widehat C(z,w)$
the Graphic GFs corresponding to the families $\mA$, $\mB$, $\mC$,
we have
\begin{align*}
    \widehat C(z,w) &=
    \sum_{n \geq 0}
    \frac{c_n(w)}{(1+w)^{\binom{n}{2}}}
    \frac{z^n}{n!}
    =
    \sum_{n \geq 0}
    \sum_{k=0}^n
    \binom{n}{k}
    (1+w)^{k (n-k) - \binom{n}{2}}
    a_k(w) b_{n-k}(w)
    \frac{z^n}{n!}
    \\&=
    \sum_{n \geq 0}
    \sum_{k=0}^n
    \binom{n}{k}
    \frac{a_k(w)}{(1+w)^{\binom{k}{2}}}
    \frac{z^k}{k!}
    \frac{b_{n-k}(w)}{(1+w)^{\binom{n-k}{2}}}
    \frac{z^{n-k}}{(n-k)!}
    =
    \widehat A(z,w) \widehat B(z,w).
\end{align*}
Thus, the Graphic GF of the arrow product of two digraph families
is the product of their Graphic GFs.

To illustrate the power of this construction,
the next proposition gives the exact enumeration
of strongly connected digraphs (SCCs).
Ideas from this proof and the result itself
will be used in the proofs of \cref{sec:implication:gf}.

\begin{proposition}[{See~\cite{Robinson71} or \cite{de2019symbolic}}]
\label{th:SCC}
The Exponential GF of strongly connected digraphs (components) $\SCC(z,w)$
is equal to
\begin{equation} \label{eq:scc}
    \SCC(z, w) = - \log \left( G(z, w) \odot \frac{1}{G(z, w)} \right),
\end{equation}
where $G(z,w)$ denotes the Exponential GF of all graphs,
from \cref{th:classic:gfs},
and $\odot$ is the exponential Hadamard product,
from \cref{sec:hadamard}.
\end{proposition}

\begin{proof}
The various SCCs of a digraph are disjoint
and each vertex belongs to an exactly one SCC,
so the SCCs form a partition of the vertices.
We say that an SCC $A$ is \emph{source-like}
if there is no arc starting in another SCC
and ending in a vertex of $A$.
Let $\widehat D(z,w,u)$ denote the Graphic GF of all digraphs,
where an additional variable $u$
marks the source-like SCCs.
Then $\widehat D(z,w,v+1)$ is the Graphic GF of all digraphs, 
where an arbitrary subset of source-like SCCs
are marked by the variable $v$.
This family has a unique decomposition
as the arrow product of a set of SCCs
(the source-like SCCs marked by $v$)
with an arbitrary digraph.
The Exponential GF of a set of SCCs marked by $v$ is
\[
    e^{v\, \SCC(z,w)}.
\]
Using \cref{proposition:conversion} to translate,
the Graphic GF of this family is
\[
    \gset(z,w) \odot e^{v\, \SCC(z,w)}.
\]
According to \cref{th:classic:gfs},
the Graphic GF of all digraphs is equal to $G(z,w)$.
Since the arrow product translates into a product
of Graphic GFs, we deduce
\[
    \widehat D(z,w,v+1) =
    \left(
        \gset(z,w) \odot e^{v\, \SCC(z,w)}
    \right)
    G(z,w).
\]
At $v=-1$, the left hand-side, $\widehat D(z,w,0)$,
is the Graphic GF of digraphs that contain no source-like SCC.
The only such digraph is the empty digraph
(containing no vertex), which Graphic GF is $1$, so
\[
    1 =
    \left(
        \gset(z,w) \odot e^{- \SCC(z,w)}
    \right)
    G(z,w).
\]
Solving this equation and using \cref{proposition:conversion}
to translate the Graphic GF into Exponential GF,
we deduce
\[
    \SCC(z,w) =
    - \log \left( G(z,w) \odot \frac{1}{G(z,w)} \right).
\]
\end{proof}

    \subsubsection{Implication GFs}
    \label{sec:implication:gf}

In this section, we present the \emph{Implication product},
a combinatorial operation combining
a \emph{digraph} family with an \emph{implication digraph} family.
We also show that the Implication GF of the Implication product
of two families is the product of their respective
Graphic and Implication GFs.
We have designed Implication GFs
 precisely to ensure this correspondence.
Enumerative results on various $2$-SAT families
will be derived in the next section.

Recall that according to our convention,
the nodes of an implication digraph form a set
\[
    \{  1, 2, \ldots, n, \n 1, \ldots, \n n \},
\]
where \( \n 1, \n 2, \ldots \) denote the negated literals.

\begin{definition}
    \label{definition:implication:product}
Let \( \mD \) be a digraph family and let \( \mF \) be an implication digraph family.
The \emph{implication product} of \( \mD \) and \( \mF \) is formed in the following way.
Let \( D \in \mD \) and \( F \in \mF \) be arbitrary members of these families,
and let \( D \) contain \( k \) vertices and
\( F \) contain \( 2(n-k) \) vertices,
so that the total number of vertices in the union of \( F \), \( D \) and \( \n D \)
is \( 2n \).
\begin{enumerate}
    \item An arbitrary partition of labels \( \sigma_F \uplus \sigma_D
    = \{ 1, \ldots, n \} \), \( |\sigma_F| = k \), \( |\sigma_D| = n-k \)
    is chosen.
\item The nodes of \( D \) and \( F \) respectively receive labels
    from \( \sigma_D \) and \( \sigma_F \) (in the case of \( F \) the
        labels extend to negated literals).
        An arbitrary subset of nodes in $D$ are then labeled as negated
        literals. The resulting (``left'') digraph is called \( L(D) \).
        The negated (``right'') digraph \( \n{L(D)} \)
    is then called \( R(D) \).
    \item A new implication digraph is formed by taking an ordered union of the digraphs
    \( L(D) \), \( F \) and \( R(D) \) with new node labels according to the partition
    \( (\sigma_F, \sigma_D) \).
    \item An arbitrary subset of arcs from \( L(D) \) to \( F \) is added. For each
    \( x \in L(D) \) and \( v \in F \), if an arc \( x \to v \) was added, then
    an arc \( \n v \to \n x \) is also added from \( F \) to \( R(D) \).
    \item An arbitrary subset of arcs from \( L(D) \) to \( R(D) \) is added, ensuring
    that no arc of type \( x \to \n x \) is picked, as this would lead, by symmetry,
    to multiple arcs. If an arc \( x \to y \) is added, then a symmetrical arc
    \( \n y \to \n x \), also from \( L(D) \) to \( R(D) \), is added.
\end{enumerate}
By taking the union over all pairs \( (D, F) \),
all possible label partitions \( (\sigma_D, \sigma_F) \),
vertex negations, and all arc subsets from \( L(D) \) to \( F \) and
from \( L(D) \) to \( R(D) \),
we obtain the implication product of \( \mD \) and \( \mF \).
\end{definition}

\begin{example}
This construction is illustrated in \cref{fig:arrow:product}.
In our example, we let \( k = 4 \) and \( n = 7 \). The digraph \( D \)
receives labels \( \sigma_D = \{ 1, 4, 6, 7 \} \).
Then, according to the arbitrary choices, a vertex with the label \( 6 \)
is labeled as negated, which yields a digraph \( L(D) \).
The negation of \( L(D) \) now has labels \( \{ \n 1, \n 4, 6, \n 7 \} \),
and its arc directions are reversed.
The implication digraph \( F \) receives the remaining labels \( \{2, 3, 5\} \).
Then, an arbitrary subset of arcs is added from \( L(D) \) to \( F \),
which is, in our example, a set
\( \{ 7 \to 2, \n 6 \to 2 \} \), and, by symmetry, there is a subset of arcs
\( \{ \n 2 \to \n 7, \n 2 \to 6\} \) from \( F \) to \( R(D) \).
Finally, the arcs \( \{ 1 \to \n 4, 4 \to \n 1 \} \) are added from \( L(D) \) to
\( R(D) \) in a way that avoids adding arcs \( x \to \n x \), and preserves the
symmetry property of implication digraphs.
\end{example}

\begin{figure}[hbt!]
    \begin{center}
        \begin{tikzpicture}[>=stealth',thick, scale = 0.8]
\draw
node[arn_b](x) at ( -1 + 0, -.2)  {\( 1 \)}
node[arn_b](y) at ( -1 + 1.2,-1)  {\( 4 \)}
node[arn_r](z) at ( -1 + 2, 0.2)  {\( \fn 6 \)}
node[arn_b](t) at ( -1 + 0.8, 1)  {\( 7 \)}
;
\draw
node[arn_b](a)  at (3, 1)  {\( 2 \)}
node[arn_b](b)  at (3.3, -.2)  {\( 3 \)}
node[arn_b](c)  at (3,-1)  {\( 5 \)}
node[arn_r](a') at (5, 1)  {\( \fn 2 \)}
node[arn_r](b') at (5,-1)  {\( \fn 3 \)}
node[arn_r](c') at (4.5, -.2)  {\( \fn 5 \)}
;
\draw
node[arn_r](x') at (7 + 0.4, -1)  {\( \fn 1 \)}
node[arn_r](y') at (7 + 2,-.6)  {\( \fn 4 \)}
node[arn_b](z') at (7 + 1.6, 1)  {\( 6 \)}
node[arn_r](t') at (7 + 0, 0.6)  {\( \fn 7 \)}
;
\fromto{x}{y};
\fromto{y}{z};
\fromto{z}{t};
\fromto{t}{x};
%
\fromto{y'}{x'};
\fromto{z'}{y'};
\fromto{t'}{z'};
\fromto{x'}{t'};
\fromto{b}{c'};
\fromto{c}{b'};
\fromto{a}{b};
\fromto{b'}{a'};
\node[rectangle,dashed,draw,fit=(x)(y)(z)(t), very thick,
      rounded corners=5mm,inner sep= 5pt, bgreen] {};
\node[rectangle,dashed,draw,fit=(x')(y')(z')(t'), very thick,
      rounded corners=5mm,inner sep= 5pt, bgreen] {};
\node[rectangle,dashed,draw,fit=(a)(a')(c)(c'), very thick,
      rounded corners=5mm,inner sep= 5pt, bviolet] {};
\path (y) edge [blackred,thick, bend right=35,
decoration={markings,mark=at position 0.99 with
{\arrow[ultra thick,blackred, rotate=0]{>}}}, postaction={decorate}
] node {} (x');
\path (x) edge [blackred,thick, bend right=55,
decoration={markings,mark=at position 0.99 with
{\arrow[ultra thick,blackred, rotate=0]{>}}}, postaction={decorate}
] node {} (y');
\path (z) edge [blackred,thick, bend right=10,
decoration={markings,mark=at position 0.99 with
{\arrow[ultra thick,blackred, rotate=0]{>}}}, postaction={decorate}
] node {} (a);
\path (a') edge [blackred,thick, bend left=55,
decoration={markings,mark=at position 0.99 with
{\arrow[ultra thick,blackred, rotate=0]{>}}}, postaction={decorate}
] node {} (z');
\aprod{t}{a};
\aprod{a'}{t'};
\node at (-1, -2) {$L(D)$, $D \in \mathcal D$};
\node at (4, -2) {$F \in \mathcal F$};
\node at (9.2, -2) {$R(D)$, $D \in \mathcal D$};
\end{tikzpicture}
    \end{center}
    \caption{An element from the implication product of a certain digraph family
    \( \mD \) and a certain implication digraph family \( \mF \).}
    \label{fig:arrow:product}
\end{figure}
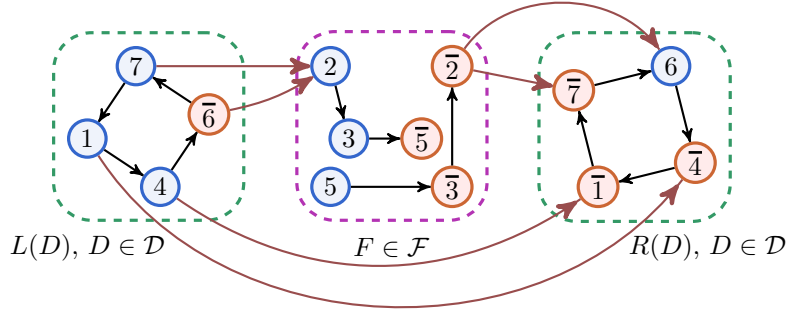

Let us emphasize that, by definition, this product operation is not commutative,
because it involves two families of different kinds,
namely the digraphs and the implication digraphs.
This explains why we need two separate kinds of generating functions,
one for the digraph family,
the other one for the implication digraph family.
Surprisingly, in the context of this combinatorial operation,
there is no need to introduce a new type of GF for digraphs as we
can still use the Graphic GF.
%

\begin{proposition} \label{th:implication:product:from:gf}
Let \( \mA \) be a family of digraphs and
\( \mB \) be a family of implication digraphs,
and let \( \widehat A(z,w) \) and \( \ddot{B}(z,w) \)
denote their respective Graphic and Implication GFs.
Let \( \mC \) denote their implication product,
and \( \ddot{C}(z, w) \) its Implication GF.
Then,
\[
    \ddot{C}(z, w) = \widehat A(z, w) \cdot \ddot{B}(z, w).
\]
\end{proposition}

\begin{proof}
    Let us construct the convolution rule corresponding to the implication product.
    Let
    \[
        a_n(w) = \sum_{m \geq 0} a_{n,m} w^m
        \quad \text{and} \quad
        b_n(w) = \sum_{m \geq 0} b_{n,m} w^m,
    \]
    where \( a_{n,m} \) denotes the number of digraphs from \( \mA \) with $n$ nodes and
    $m$ arcs, and \( b_{n,m} \) denotes the number of implication formulae from
    \( \mB \) with $2n$ vertices $2m$ implication arcs (\ie
    corresponding to 2-CNFs with $n$ variables and $m$ clauses).

    Let us compute the generating function \( c_n(w) \)
    of the number of ways to form an implication digraph
    with $n$ vertices in total. Suppose that a digraph \( D \in \mA \)
    has $k$ vertices and a corresponding implication digraph \( F \in \mB \)
    has \( 2(n-k) \) vertices.
    We need to take a sum over all possible values of $k$.
    The number of ways to choose the labels belonging to either side of the product is
    \( \binom{n}{k} \). Then, there are \( 2^k \) ways to choose a negated subset of
    vertices in the digraph \( L(D) \).
    Next, the generating function of the number of ways
    to choose a subset of arcs from
    \( L(D) \) to \( F \) is \( (1+w)^{k \cdot 2(n-k)} \).
    Finally, drawing the edges from \( L(D) \) to \( R(D) \) yields a choice
    from \( k(k-1)/2 \) possible combinations (each edge has also a
    complementary negated edge, which provides the factor $1/2$).
    This gives in total
    \begin{align*}
        c_n(w) &= \sum_{k = 0}^n \binom{n}{k} 2^k (1+w)^{2k(n-k) + k(k-1)/2} a_k(w) b_{n-k}(w)
        \\&=
        \sum_{k = 0}^n
        \binom{n}{k}
        2^n
        (1+w)^{n (n - 1)}
        \frac{a_k(w)}{(1+w)^{\binom{k}{2}}}
        \frac{b_{n-k}(w)}{(1+w)^{(n-k)(n-k-1)} 2^{n-k}}.
    \end{align*}  
    On the other hand, by expanding the brackets in
    \( \widehat A(z, w) \cdot \ddot{B} (z, w) \), we also obtain, by grouping the summands,
    \begin{align*}
        \widehat A(z, w) \cdot \ddot{B} (z, w)
        &=
        \sum_{n \geq 0}
        \sum_{k=0}^n
        \frac{a_k(w)}{(1+w)^{\binom{k}{2}}}
        \frac{b_{n-k}(w)}{(1+w)^{(n-k)(n-k-1)} 2^{n-k}}
        \frac{z^n}{n!}
        \\&=
        \sum_{n \geq 0} \dfrac{c_n(w)}{(1+w)^{n(n-1)}} \dfrac{z^n}{2^n n!},
    \end{align*}
    which completes the proof.  
\end{proof}

\begin{remark}
A similar technique can be used if edges of the form \( x \to \n x \) are allowed,
or when loops and multiple edges are allowed. In such models it is useful to recall
that the composition operation requires dealing with
\emph{compensation factors}~\cite{janson1993birth}, which was handled in
\cite{panafieu2016analytic} using a very natural construction of GF
which is doubly-exponential in variables marking both vertices and edges.
By further exploring this idea with 2-SAT, it is possible to
arrive at a similar definition of the compensation factor for such formulae in the
presence of multiple arcs and loops, similar
to~\cite{DigraphWindow,dovgal2019birth}.
\end{remark}

\section{Counting 2-SAT families}
\label{sec:counting:sat:families}

In this Section, we use the implication product
to obtain the GFs of satisfiable 2-CNFs
and contradictory strongly connected components,
as well as 2-CNFs whose implication digraphs
have prescribed ordinary and contradictory SCCs.

\subsection{The main decomposition scheme}

The first proposition we introduce
exposes a link between the generating function of all 2-CNFs
and all digraphs.
It will be used to simplify the expressions where they appear.
Recall that the Exponential GF of the implication digraphs given
in~\cref{def:counting:digraphs} is not constrained to even powers:
the counting sequence is indexed by \( n \) and \( m \), where
\( n \) denotes \emph{half} the number of vertices, and \( m \)
denotes \emph{half} the number of arcs. This convention seems natural
if one considers CNF as sets of clauses with Boolean variables, but it can
be less intuitive when manipulating directed combinatorial structures
such as implication digraphs.

\begin{proposition} \label{th:CNF:D}
Let \( \CNF(z) = \CNF(z,w) \)
denote the Exponential GF of all implication digraphs,
then its corresponding Implication GF is
\[
    \ddot{\CNF}(z) = D(z/2).
\]
\end{proposition}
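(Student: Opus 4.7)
The claim reduces to a direct counting identity, so the plan is just to extract $c_n(w)$ explicitly from the definition of $\CNF(z,w)$ and then verify that the normalization built into the Implication GF cancels exactly the right power of $(1+w)$.

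First I would enumerate, for a fixed number $n$ of Boolean variables, the total number of admissible clauses. The rules from \cref{sec:conjunctive:normal:forms} say that a clause uses two distinct Boolean variables, no repetition of literals, and no repetition of clauses; so clauses are in bijection with choices of an unordered pair of variables together with a sign for each of the two literals. This gives $4\binom{n}{2} = 2n(n-1)$ potential clauses. An implication digraph on $n$ Boolean variables is therefore determined by choosing any subset of these $2n(n-1)$ clauses, and marking each present clause by $w$ yields
\begin{equation*}
    c_n(w) \;=\; \sum_{m \geq 0} c_{n,m}\,w^m \;=\; (1+w)^{2n(n-1)}.
\end{equation*}

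Next I would substitute this into the definition of the Implication GF from \cref{def:counting:digraphs}. The normalization $(1+w)^{n(n-1)}$ in the denominator cancels half of the exponent, leaving
\begin{equation*}
    \ddot{\CNF}(z,w)
    \;=\; \sum_{n\ge 0}\frac{(1+w)^{2n(n-1)}}{(1+w)^{n(n-1)}}\,\frac{z^n}{2^n n!}
    \;=\; \sum_{n\ge 0}(1+w)^{n(n-1)}\,\frac{(z/2)^n}{n!}.
\end{equation*}
Comparing with the formula for $D(z,w)$ recalled just before \cref{proposition:conversion}, the right-hand side is exactly $D(z/2,w)$, which completes the argument.

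I do not anticipate a genuine obstacle here: the only point that needs care is the bookkeeping of the ``clause vs.\ arc'' convention. In particular one must remember that each clause contributes two symmetric arcs, and that the Implication GF is indexed by the number $n$ of Boolean variables (half the number of vertices) with an extra factor $2^n n!$ rather than $(2n)!$ in the denominator; once this is aligned with the definition of $D(z,w)$, the substitution $z \mapsto z/2$ appears naturally from the factor $1/2^n$.
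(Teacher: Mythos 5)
Your argument is correct and matches the paper's proof essentially line for line: both count the $4\binom{n}{2}=2n(n-1)$ admissible clauses to get $c_n(w)=(1+w)^{2n(n-1)}$, apply the normalization from \cref{def:counting:digraphs}, and recognize the resulting series as $D(z/2,w)$. Nothing further is needed.
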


\begin{proof}
A 2-CNF on $n$ variables is characterized by its set of clauses.
The set of all possible clauses has cardinality $4 \binom{n}{2} = 2n(n-1)$,
so the Exponential GF of 2-CNFs is
\[
    \CNF(z) =
    \sum_{n \geq 0}
    (1+w)^{2n(n-1)} \frac{z^n}{n!}.
\]
By application of \cref{proposition:conversion},
the corresponding Implication GF is then
\[
    \ddot{\CNF}(z) =
    \sum_{n \geq 0}
    \frac{(1+w)^{2n(n-1)}}{(1+w)^{n(n-1)}}
    \frac{z^n}{2^n n!}
    =
    \sum_{n \geq 0}
    (1+w)^{n(n-1)}
    \frac{(z/2)^n}{n!},
\]
which is equal to $D(z/2)$.
\end{proof}

Now, let us explore the structural properties of a formula coming from an
implication digraph.
Recall that according to \cref{th:SCC}, $\SCC(z) = -\log(G(z) \odot G(z)^{-1})$
denotes the Exponential GF of strongly connected digraphs.

\begin{lemma} \label{th:cnf:u:v}
Let \( \CNF(z, u, v) \) (variable $w$ omitted)
denote the Exponential GF of all implication digraphs,
where \( u \) marks the number of \emph{source-like
non-isolated ordinary} strongly connected components,
and \( v \) marks the number of unordered tuples \( \{C, \n C\} \)
containing an \emph{isolated ordinary} component \( C \) and its
negation \( \n C \)
    (in the sense of \cref{th:ordinary:SCC:symmetric}, in other words,
    \( v \) marks twice the number of isolated ordinary components).
Let $\ddot{\CNF}(z)$ denote the Implication GF of implication digraphs
(from \cref{th:CNF:D}), then
\begin{equation} \label{eq:cnf:u:v}
    \CNF(z, s+1, 2s+2t+1)
    =
    \left(\left[
        (e^{s\, \SCC(z)} \odot \gset(z))
        \ddot{\CNF}(z)
    \right] \odot D(2z) \right)
    e^{t\, \SCC(2z)}.
\end{equation}
\end{lemma}

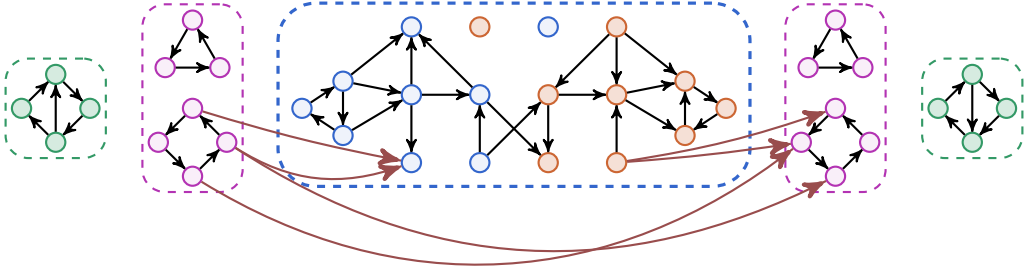
\begin{figure}[hbt!]
    \begin{center}
        \begin{tikzpicture}[>=stealth',thick, scale = 0.9]
\draw
%
node[arnVertPetit,scale=0.9](iA) at ( -0.2, -.7)   {}
node[arnVertPetit,scale=0.9](iS) at (  0.3, -.2)   {}
node[arnVertPetit,scale=0.9](iD) at ( -0.2,  .3)   {}
node[arnVertPetit,scale=0.9](iF) at ( -0.7, -.2)   {}
node[arnVertPetit,scale=0.9](iA') at ( 13 + 0.2, -.7)   {}
node[arnVertPetit,scale=0.9](iS') at ( 13 - 0.3, -.2)   {}
node[arnVertPetit,scale=0.9](iD') at ( 13 + 0.2,  .3)   {}
node[arnVertPetit,scale=0.9](iF') at ( 13 + 0.7, -.2)   {}
node[arnVioletPetit,scale=0.9] (aA) at (1.2+.5 + 0.1, 1.1)  {}
node[arnVioletPetit,scale=0.9] (aS) at ( .8+.5 + 0.1,  .4)  {}
node[arnVioletPetit,scale=0.9] (aD) at (1.6+.5 + 0.1,  .4)  {}
node[arnVioletPetit,scale=0.9](dA) at ( 1.3+.5, -1.2)   {}
node[arnVioletPetit,scale=0.9](dS) at ( 1.8+.5,  -.7)   {}
node[arnVioletPetit,scale=0.9](dD) at ( 1.3+.5,  -.2)   {}
node[arnVioletPetit,scale=0.9](dF) at (  .8+.5,  -.7)   {}
%
node[arnBleuPetit,scale=0.9] (sA)  at (4.0, .2) {}
node[arnBleuPetit,scale=0.9] (sS)  at (4.0,-.6) {}
node[arnBleuPetit,scale=0.9] (sD)  at (3.4,-.2) {}
node[arnBleuPetit,scale=0.9](q)  at ( 5, 0)  { }
node[arnBleuPetit,scale=0.9](w)  at ( 5,-1)  { }
node[arnBleuPetit,scale=0.9](e)  at ( 6,-1)  { }
node[arnBleuPetit,scale=0.9](r)  at ( 6, 0)  { }
node[arnBleuPetit,scale=0.9](t)  at ( 5, 1)  { }
node[arnRougePetit,scale=0.9](y)  at ( 6, 1)  { }
node[arnRougePetit,scale=0.9] (sA')  at (13 - 4.0, .2) {}
node[arnRougePetit,scale=0.9] (sS')  at (13 - 4.0,-.6) {}
node[arnRougePetit,scale=0.9] (sD')  at (13 - 3.4,-.2) {}
node[arnRougePetit,scale=0.9](q')  at ( 13 - 5, 0)  { }
node[arnRougePetit,scale=0.9](w')  at ( 13 - 5,-1)  { }
node[arnRougePetit,scale=0.9](e')  at ( 13 - 6,-1)  { }
node[arnRougePetit,scale=0.9](r')  at ( 13 - 6, 0)  { }
node[arnRougePetit,scale=0.9](t')  at ( 13 - 5, 1)  { }
node[arnBleuPetit,scale=0.9](y')  at ( 13 - 6, 1)  { }
node[arnVioletPetit,scale=0.9] (aA') at ( 13 - 1.7 - 0.1, 1.1) {}
node[arnVioletPetit,scale=0.9] (aS') at ( 13 - 1.3 - 0.1, .4)  {}
node[arnVioletPetit,scale=0.9] (aD') at ( 13 - 2.1 - 0.1, .4)  {}
node[arnVioletPetit,scale=0.9](dA') at ( 13 - 1.8, -1.2)   {}
node[arnVioletPetit,scale=0.9](dS') at ( 13 - 2.3,  -.7)   {}
node[arnVioletPetit,scale=0.9](dD') at ( 13 - 1.8,  -.2)   {}
node[arnVioletPetit,scale=0.9](dF') at ( 13 - 1.3,  -.7)   {}
;

\node[rectangle,dashed,draw,
fit=(q)(w)(e)(r)(t)(sA)(sS)(sD)(q')(w')(e')(r')(t')(sA')(sS')(sD'),
      rounded corners = 5mm,inner sep = 5pt, bblue, very thick] {};
\node[rectangle,dashed,draw,fit=(dA)(dS)(dD)(dF)(aA)(aS)(aD),
      rounded corners = 3mm,inner sep = 2pt, bviolet, thick] {};
\node[rectangle,dashed,draw,fit=(dA')(dS')(dD')(dF')(aA')(aS')(aD'),
      rounded corners = 3mm,inner sep = 2pt, bviolet, thick] {};
\node[rectangle,dashed,draw,fit=(iA)(iS)(iD)(iF),
      rounded corners = 3mm,inner sep = 2pt, bgreen, thick] {};
\node[rectangle,dashed,draw,fit=(iA')(iS')(iD')(iF'),
      rounded corners = 3mm,inner sep = 2pt, bgreen, thick] {};

\fromto{q}{w};
\fromto{q}{r};
\fromto{e}{r};
\fromto{r}{t};
\fromto{q}{t};
\fromto{aA}{aS};
\fromto{aS}{aD};
\fromto{aD}{aA};
\fromto{dA}{dS};
\fromto{dS}{dD};
\fromto{dD}{dF};
\fromto{dF}{dA};
\fromto{aS'}{aA'};
\fromto{aD'}{aS'};
\fromto{aA'}{aD'};
\fromto{dS'}{dA'};
\fromto{dD'}{dS'};
\fromto{dF'}{dD'};
\fromto{dA'}{dF'};
\fromto{sA}{sS};
\fromto{sS}{sD};
\fromto{sD}{sA};
\fromto{sA}{t};
\fromto{sA}{q};
\fromto{sS}{q};
\fromto{w'}{q'};
\fromto{r'}{q'};
\fromto{r'}{e'};
\fromto{t'}{r'};
\fromto{t'}{q'};
\fromto{sS'}{sA'};
\fromto{sD'}{sS'};
\fromto{sA'}{sD'};
\fromto{t'} {sA'};
\fromto{q'} {sA'};
\fromto{q'} {sS'};
\fromto{e}{r'};
\fromto{r}{e'};
\fromto{iS}{iA};
\fromto{iA}{iD};
\fromto{iD}{iS};
\fromto{iF}{iD};
\fromto{iA}{iF};
\fromto{iA'}{iS'};
\fromto{iD'}{iA'};
\fromto{iS'}{iD'};
\fromto{iD'}{iF'};
\fromto{iF'}{iA'};

\path (dD) edge [blackred,thick, bend right=3,
    decoration={markings,mark=at position 0.99 with
    {\arrow[ultra thick,blackred, rotate=0]{>}}}, postaction={decorate}
    ] node {} (w);
\path (dS) edge [blackred,thick, bend right=26,
    decoration={markings,mark=at position 0.99 with
    {\arrow[ultra thick,blackred, rotate=0]{>}}}, postaction={decorate}
    ] node {} (w);
\path (w') edge [blackred,thick, bend right=5,
    decoration={markings,mark=at position 0.99 with
    {\arrow[ultra thick,blackred, rotate=0]{>}}}, postaction={decorate}
    ] node {} (dD');
\path (w') edge [blackred,thick, bend right=2,
    decoration={markings,mark=at position 0.99 with
    {\arrow[ultra thick,blackred, rotate=0]{>}}}, postaction={decorate}
    ] node {} (dS');
\path (dA) edge [blackred,thick, bend right=35,
    decoration={markings,mark=at position 0.995 with
    {\arrow[ultra thick,blackred, rotate=0]{>}}}, postaction={decorate}
    ] node {} (dS');
\path (dS) edge [blackred,thick, bend right=30,
    decoration={markings,mark=at position 0.995 with
    {\arrow[ultra thick,blackred, rotate=0]{>}}}, postaction={decorate}
    ] node {} (dA');

\end{tikzpicture}
    \end{center}
    \caption{A schematic representation of the main decomposition scheme
    for $\CNF(z, s+1, 2s + 2t + 1)$ (labels omitted for convenience).
    Mirror symmetry reflects a relation between pairs of negated literals.
    A 2-CNF implication digraph without any marked components is located
    in the center (in blue and red).
    On the sides are the marked source-like (and sink-like) distinguished
    components (second leftmost and second rightmost, in purple).
    Some of these components may happen to be isolated.
    Then come marked isolated components, also in pairs (leftmost
    and rightmost, in green).}
    \label{fig:main:decomposition}
\end{figure}

\begin{proof}
Recall that according to \cref{th:ordinary:SCC:symmetric},
for any isolated ordinary component $C$ of an implication digraph,
the digraph also contains an isolated ordinary component $\n C$
obtained from $C$ by negating the literals
and reversing the arcs.
Let $\mF$ denote the family of implication digraphs where
\begin{itemize}
\item
a subset of non-isolated source-like ordinary components
are marked by the variable $s$,
\item
a subset of all unordered tuples $\{C, \n C\}$
of isolated ordinary components is distinguished.
In each of those tuples, one component is chosen
and marked either by the variable $s$,
or by the variable $t$.
\end{itemize}
The Exponential GF of $\mF$ is then $\CNF(z, s+1, 2s + 2t + 1)$,
which is the left hand-side of~\eqref{eq:cnf:u:v}.
Any implication digraph in $\mF$ has a unique decomposition as
\begin{itemize}
\item[(i)]
a set of \emph{ordered} tuples $(C, \n C)$ of isolated ordinary components
(the components $C$ are the one marked by $t$),
\item[(ii)]
and the implication product of
a \emph{set} of ordinary components
(they correspond to the components marked by $s$)
with an arbitrary implication digraph
(the unmarked part of the implication digraph).
\end{itemize}
This decomposition is depicted in \cref{fig:main:decomposition}.
We now show that it translates into the generating function given
in the right hand-side of~\eqref{eq:cnf:u:v}.

\noindent \textbf{Item (i).}
\cref{th:ordinary:SCC:symmetric} gives a recipe
to build an ordered tuple of isolated ordinary components.
\begin{enumerate}
\item
Start with a strongly connected digraph (component) $B$;
\item
For each vertex $x$, choose to keep it as a literal $x$,
or replace it with its negation $\n x$.
We denote the resulting component by $C$;
\item
Add a negated component $\n C$
(obtained by negating each literal and reversing the arcs).
\end{enumerate}
The ordered tuple is then $(C, \n C)$.
This construction implies that
the Exponential GF of ordered tuples of isolated ordinary components
is $\SCC(2 z)$.
Thus, the Exponential GF of sets
of \emph{ordered} tuples of isolated ordinary components,
marked by the variable $t$, is $e^{t\, \SCC(2 z)}$.

\noindent \textbf{Item (ii).}
The Exponential GF of a set of ordinary components marked by $s$
is $e^{s\, \SCC(z)}$.
Applying \cref{proposition:conversion},
its Graphic GF is $e^{s\, \SCC(z)} \odot \gset(z)$.
By \cref{th:implication:product:from:gf},
the Implication GF of the implication product
from Item (ii) is
\[
    \left( e^{s\, \SCC(z)} \odot \gset(z) \right)
    \ddot \CNF(z)
\]
so, according to \cref{proposition:conversion},
its Exponential GF is
\[
    \left[
        \left( e^{s\, \SCC(z)} \odot \gset(z) \right)
        \ddot \CNF(z)
    \right]
    \odot D(2z)
\]
Combining (i) and (ii) in a labeled product
(see \cref{sec:exponential:gf}),
we deduce that the Exponential GF of $\mF$ is
\[
    \left(
        \left[
            \left( e^{s\, \SCC(z)} \odot \gset(z) \right)
            \ddot \CNF(z)
        \right]
        \odot D(2z)
    \right)
    e^{t\, \SCC(2 z)}.
\]
\end{proof}

Furthermore, we can consider the case when the contradictory and
ordinary strongly connected components of the implication digraph only
belong to the two given families.
This allows us to obtain the Exponential GF of these implication
digraphs.

\begin{lemma} \label{th:cnf:scc:cscc:u:v}
Let \( \CNF_{\scc,\cscc}(z, u, v) \) (variable $w$ omitted)
denote the Exponential GF of implication digraphs
whose ordinary SCCs belong to the family $\scc$
and whose contradictory SCCs belong to the family $\cscc$,
where \( u \) marks the number of source-like
non-isolated ordinary strongly connected components, and \( v \) marks the
    number of unordered tuples \( \{C, \n C\} \)
    containing an isolated ordinary component \( C \) and its negation
    \( \n C \)
(in the sense of \cref{th:cnf:u:v,th:ordinary:SCC:symmetric}).
Then, the following decomposition is valid:
\begin{equation}
\label{main:eq}
    \CNF_{\scc,\cscc}(z, s+1, 2s+2t+1)
    =
    \left(\left[
        (e^{s\, \scc(z)} \odot \gset(z))
        \ddot{\CNF}_{\scc,\cscc}(z,1,1)
    \right] \odot D(2z) \right)
    e^{t\, \scc(2z)},
\end{equation}
where \( \scc(z) \) is the Exponential GF of the family \( \scc \).
\end{lemma}

The proof of the lemma is identical to the proof of the previous one.
Now, we want to obtain the generating function \( \cscc(z) \),
which is the Exponential GF of the family \( \cscc \), based on the
previous result.
Note that there is no generating function of the family \( \cscc \) of any type
entering the previous expression.
In order to solve the previous equation and identify \( \CNF_{\scc,\cscc}(z, u, v) \),
we need to use another combinatorial property of the implication digraphs
which results in an additional initial condition when \( u = 0 \).
The following result is thus independent
from \cref{th:cnf:u:v} and \cref{th:cnf:scc:cscc:u:v}.

\begin{lemma}
With the previous notation, we have
    \begin{equation}
    \label{eq:no:source:like}
        \CNF_{\scc,\cscc}(z, 0, v)
        =
        e^{\cscc(z) + v\, \scc(2z)/2},
    \end{equation}
where $\cscc(z)$ is the Exponential GF of the family \( \cscc \).
\end{lemma}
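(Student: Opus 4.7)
The plan is to translate $u=0$ into a structural constraint on the implication digraph, upgrade this local constraint into the statement that every strongly connected component is isolated, and then read off the generating function as a product of two independent set constructions. Setting $u = 0$ kills every implication digraph containing a source-like non-isolated ordinary SCC, so in every digraph contributing to $\CNF_{\scc,\cscc}(z,0,v)$ every source-like ordinary SCC is forced to be isolated; combined with \cref{th:structure:CSCC} (every source-like contradictory SCC is automatically isolated), every source-like SCC of the condensation is isolated.

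The main step is to lift this to: \emph{every} SCC is isolated. Suppose some SCC $S$ is non-isolated. If $S$ itself is source-like, we already have a contradiction. Otherwise $S$ has an incoming arc; walking backwards along incoming arcs in the finite condensation DAG one eventually reaches a source-like SCC $S_0$, and $S_0$ has at least one outgoing arc (the first step of the reversed walk, leading transitively to $S$). Hence $S_0$ is a non-isolated source-like SCC, contradicting the previous paragraph. Thus the implication digraph decomposes as a disjoint union of isolated SCCs, with no inter-component arc at all.

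Once this structure is in place, the enumeration is direct. By \cref{th:ordinary:SCC:symmetric}, each ordinary isolated SCC $C$ comes with its negation $\n C$, forming an unordered pair on the same $k$ Boolean variables and carrying an underlying SCC structure on $k$ literal-vertices; the Exponential GF in $z$ (marking Boolean variables) of a single such pair is $\scc(2z)/2$, where $z \mapsto 2z$ records the polarity assignment at each of the $k$ vertices and the factor $1/2$ compensates for the pair being unordered. Marking each pair by $v$ and applying the set construction gives $e^{v\, \scc(2z)/2}$. A contradictory isolated SCC, by \cref{th:structure:CSCC}, already carries both literals of each of its variables, so its Exponential GF in $z$ marking Boolean variables is simply $\cscc(z)$, and the set construction gives $e^{\cscc(z)}$. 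Since the two families of isolated components combine as a labeled disjoint union, the Exponential GF of the whole structure is the product
\[
\CNF_{\scc,\cscc}(z,0,v) = e^{\cscc(z)} \cdot e^{v\,\scc(2z)/2} = e^{\cscc(z) + v\,\scc(2z)/2},
\]
as claimed. The main obstacle is the condensation-DAG argument of the second paragraph, which promotes the local constraint encoded by $u=0$ into the global statement that every SCC is isolated; everything else is routine symbolic-method bookkeeping.
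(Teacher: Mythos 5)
Your proof is correct and follows essentially the same route as the paper's: both reduce the $u=0$ case to the statement that the condensation DAG has no arcs at all, so the implication digraph is a disjoint union of isolated contradictory SCCs and unordered pairs $\{C,\n C\}$ of isolated ordinary SCCs, and then read off $e^{\cscc(z)}\cdot e^{v\,\scc(2z)/2}$ by the set construction. Your backward walk to a source of the condensation DAG merely spells out a step the paper states tersely (``if a digraph is not empty, there should be at least one source-like SCC''), invoking the same two ingredients, namely the $u=0$ constraint and \cref{th:structure:CSCC}.
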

\begin{proof}
The left expression is the Exponential GF of the implication digraphs
where all the source-like ordinary strongly connected components are
isolated, and where unordered tuples of isolated ordinary components are
    marked by \( v \).
Let us temporarily remove all the isolated ordinary components from the
implication digraph.
Clearly, if a digraph is not empty, there should be at least one
    source-like SCC.
Since all the source-like ordinary SCCs are now removed,
it should be a contradictory one.
But according to \cref{th:structure:CSCC}, all the source-like
    contradictory SCCs should be isolated.
Therefore, after returning back the removed isolated ordinary
components, an implication digraph corresponding to such 2-CNF
is decomposed into a set of disjoint contradictory SCCs
    and unordered tuples of isolated ordinary SCCs (each marked by $v$).
Now, the \( \mathsf{SET} \) operation on implication digraphs is again expressed
using the exponential function, and the Exponential GF of one pair of isolated
components from \( \scc \) is \( \scc(2z)/2 \) because the pair is non-ordered.
This yields the expression for the generating function.
\end{proof}

Finally, by combining the previous two lemmas,
we arrive at the enumeration formula for all implication digraphs
whose ordinary and contradictory SCCs belong to given families.
Furthermore, fixing the allowed families allows even more flexible analysis
by weighting the elements of these families and by using those weights
as additional marking parameters in order to count the number of
specific types of components in a formula, which we shall see later.

\begin{proposition} \label{th:cnf:scc:cscc}
Let
\( \ddot{\CNF}_{\scc,\cscc}(z) \) be the Implication GF
of the implication digraphs whose Exponential GFs of allowed
ordinary and contradictory SCCs are, respectively, $\scc(z)$ and
    $\cscc(z)$, then
\begin{equation} \label{eq:chosen:cscc:scc}
    \ddot{\CNF}_{\scc,\cscc}(z)
    =
    \dfrac{
        e^{\cscc(z) - \scc(2z)/2}
        \odot \idset(z)
    }{
        e^{- \scc(z)} \odot \gset(z)
    }.
\end{equation}
\end{proposition}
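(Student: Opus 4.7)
The proof is essentially a matter of combining the two preceding lemmas algebraically, so that the unknown Implication GF $\ddot{\CNF}_{\scc,\cscc}(z,1,1)$ can be isolated. The key observation is that the left-hand side of \eqref{main:eq} depends on $(u+1,2u+v+1)$ in a way that lets us force the first argument to vanish by a formal substitution.

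First, I would substitute $u=-1$ into \eqref{main:eq}. For every fixed power of $z$, both sides are polynomials in $u,v$, so the substitution is formally legitimate. With $u=-1$ the first argument becomes $u+1=0$, and the second becomes $2u+v+1=v-1$. On the left-hand side, \eqref{eq:no:source:like} then gives
\[
    \CNF_{\scc,\cscc}(z,0,v-1) \;=\; e^{\cscc(z)+(v-1)\scc(2z)/2}.
\]
On the right-hand side, the factor $e^{u\,\scc(z)}$ becomes $e^{-\scc(z)}$, and the external factor $e^{v\,\scc(2z)/2}$ remains. Dividing by $e^{v\,\scc(2z)/2}$ gives
\[
    e^{\cscc(z)-\scc(2z)/2}
    \;=\;
    \Bigl[\bigl(e^{-\scc(z)}\odot_z \gset(z)\bigr)\,\ddot{\CNF}_{\scc,\cscc}(z,1,1)\Bigr]\odot_z D(2z).
\]

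Next, I would invert the outer $\odot_z D(2z)$ by taking the exponential Hadamard product with $\idset(z)$. From the identities in \cref{proposition:conversion}, the coefficient-wise check shows that $\odot_z D(2z)$ followed by $\odot_z \idset(z)$ is the identity on formal series in $z$: the coefficient of $z^n/n!$ is multiplied successively by $2^n(1+w)^{n(n-1)}$ and by its inverse. Since exponential Hadamard products commute and associate with each other (they act coefficient-wise), the result is
\[
    e^{\cscc(z)-\scc(2z)/2}\odot_z \idset(z)
    \;=\;
    \bigl(e^{-\scc(z)}\odot_z \gset(z)\bigr)\cdot\ddot{\CNF}_{\scc,\cscc}(z,1,1),
\]
and dividing by the prefactor yields \eqref{eq:chosen:cscc:scc}.

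The only subtle point, and the one I would spell out with care, is the legitimacy of the substitution $u=-1$: one must verify that \eqref{main:eq} is an identity of formal power series in $z$ whose coefficients are polynomials in $u$ and $v$, so that specialization to a negative integer is well defined. The division by $e^{-\scc(z)}\odot_z \gset(z)$ is also legitimate because this series has constant term $1$ and therefore is invertible in the ring of formal power series. Aside from these bookkeeping remarks, the argument is a short algebraic manipulation of the two lemmas.
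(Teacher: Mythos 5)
Your proposal is correct and follows essentially the same route as the paper: substitute $u=-1$ into \eqref{main:eq}, invoke the initial condition \eqref{eq:no:source:like}, cancel the factor $e^{v\,\scc(2z)/2}$, undo the $\odot_z D(2z)$ by a Hadamard product with $\idset(z)$, and divide by $e^{-\scc(z)}\odot_z\gset(z)$. The only difference is that you make explicit the bookkeeping (polynomiality in $u,v$ justifying the specialization, the Hadamard inverses, and invertibility of the denominator) that the paper leaves implicit.
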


\begin{proof}
By plugging \( s = -1 \) into~\eqref{main:eq}, we obtain
\begin{equation}
    \CNF_{\scc,\cscc}(z, 0, 2t-1)
    =
    \left(
    \left[
        (e^{- \scc(z)} \odot \gset(z))
        \ddot{\CNF}_{\scc,\cscc}(z)
    \right]
    \odot D(2z)
    \right)
    e^{t\, \scc(2z)}
\end{equation}
and by combining with~\eqref{eq:no:source:like}, we obtain
\begin{equation}
    e^{\cscc(z) + (2t-1) \scc(2z)/2}
    =
    \left(
    \left[
        (e^{- \scc(z)} \odot \gset(z))
        \ddot{\CNF}_{\scc,\cscc}(z)
    \right]
    \odot D(2z)
    \right)
    e^{t\, \scc(2z)},
\end{equation}
which, at $t=0$ (or, in fact, with any $t$), yields
\begin{equation}
    e^{\cscc(z) - \scc(2z)/2}
    \odot \idset(z)
    =
    (e^{- \scc(z)} \odot \gset(z))
    \ddot{\CNF}_{\scc,\cscc}(z).
\end{equation}
This completes the proof.
\end{proof}

\subsection{Counting satisfiable 2-CNFs and contradictory SCCs}

A first application of~\cref{th:cnf:scc:cscc}
is the enumeration of satisfiable 2-CNFs.

\begin{theorem} \label{th:satisfiable}
Let $\ddot{\SAT}(z)$ denote the Implication GF of satisfiable 2-CNFs.
Then,
\[
    \ddot{\SAT}(z) =
    G(z) \left(
    \sqrt{G(z) \odot \frac{1}{G(z)}}
    \odot \idset(2z) \right).
\]
\end{theorem}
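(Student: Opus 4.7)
The plan is to derive the formula as a direct specialization of Proposition~\ref{th:cnf:scc:cscc}, combined with elementary simplifications based on associativity of the Hadamard product.

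First, I would invoke the structural characterization of satisfiable 2-CNFs: by the proposition stating that a 2-CNF is satisfiable iff it has no contradictory variable, together with the fact that contradictory SCCs are precisely those containing a contradictory variable, a 2-CNF is satisfiable iff its implication digraph has no contradictory SCC. Thus, I would apply Proposition~\ref{th:cnf:scc:cscc} with $\scc(z) = \SCC(z)$ (every digraph SCC is allowed as an ordinary SCC) and $\cscc(z) = 0$ (no contradictory SCC permitted), obtaining
\[
    \ddot{\SAT}(z)
    =
    \dfrac{
        e^{-\SCC(2z)/2} \odot_z \idset(z)
    }{
        e^{-\SCC(z)} \odot_z \gset(z)
    }.
\]

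Next, I would simplify the denominator. By Proposition~\ref{th:SCC}, $e^{-\SCC(z)} = G(z) \odot_z 1/G(z)$. The Hadamard product $\odot_z$ is associative and commutative (it is given coefficient-wise in $z^n/n!$), and a direct check shows $G(z) \odot_z \gset(z) = \sum_{n\geq 0} z^n/n! = e^z$, which is the identity for $\odot_z$. Therefore
\[
    e^{-\SCC(z)} \odot_z \gset(z)
    = \bigl(G(z) \odot_z \gset(z)\bigr) \odot_z \tfrac{1}{G(z)}
    = e^z \odot_z \tfrac{1}{G(z)}
    = \tfrac{1}{G(z)}.
\]

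For the numerator, the key observation is that the substitution $z \mapsto 2z$ can be swapped between the two factors of the Hadamard product: writing $e^{-\SCC(z)/2} = \sum_n a_n z^n/n!$, one has $e^{-\SCC(2z)/2} \odot_z \idset(z) = \sum_n (2^n a_n)\cdot (2^{-n}(1+w)^{-n(n-1)})\, z^n/n! = e^{-\SCC(z)/2} \odot_z \idset(2z)$. Using $e^{-\SCC(z)/2} = \sqrt{e^{-\SCC(z)}} = \sqrt{G(z) \odot_z 1/G(z)}$ and combining with the denominator computation yields the claimed identity
\[
    \ddot{\SAT}(z) = G(z)\left(\sqrt{G(z) \odot_z \tfrac{1}{G(z)}} \odot_z \idset(2z)\right).
\]

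The only real care required is bookkeeping of the $2^n$ factors that appear when rescaling $z \mapsto 2z$ inside $\SCC$ vs.\ $\idset$; conceptually there is no obstacle, since every ingredient (decomposition of satisfiable formulae, $\SCC$ formula, Hadamard identity $G \odot_z \gset = e^z$) has already been established earlier in the paper.
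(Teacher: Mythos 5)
Your proposal is correct and follows essentially the same route as the paper's own proof: specialize Proposition~\ref{th:cnf:scc:cscc} with $\cscc(z)=0$ and $\scc(z)=\SCC(z)$, move the factor $2$ across the Hadamard product in the numerator via $f(az)\odot_z g(z)=f(z)\odot_z g(az)$, and collapse the denominator to $1/G(z)$ using $e^{-\SCC(z)}=G(z)\odot_z \tfrac{1}{G(z)}$. Your explicit bookkeeping of the $2^n$ factors and the observation that $G(z)\odot_z\gset(z)=e^z$ is the Hadamard identity are just slightly more detailed renderings of the same steps.
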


\begin{proof}
A formula is satisfiable if and only if its set of contradictory SCCs is empty.
Injecting $\cscc(z) = 0$ and $\scc(z) = \SCC(z)$ into~\eqref{eq:chosen:cscc:scc}
gives
\[
    \ddot{\SAT}(z) =
    \frac{e^{-\SCC(2z)/2} \odot \idset(z)}
        {e^{-\SCC(z)} \odot \gset(z)}.
\]
The Hadamard relation
\[
    f(a z) \odot g(z) = f(z) \odot g(a z)
\]
is applied in the numerator
\begin{equation} \label{eq:ddotsat}
    \ddot{\SAT}(z) =
    \frac{e^{-\SCC(z)/2} \odot \idset(2 z)}
        {e^{-\SCC(z)} \odot \gset(z)}.
\end{equation}
Replacing the generating function $\SCC(z)$
with its expression from~\eqref{eq:scc},
we have
\[
    e^{-\SCC(z)} \odot \gset(z) =
    G(z) \odot \frac{1}{G(z)} \odot \gset(z).
\]
Given the expressions of $G(z)$ and $\gset(z)$
provided in \cref{proposition:conversion},
the exponential Hadamard product
$G(z) \odot \gset(z)$ is equal to $1$, so
the denominator of~\eqref{eq:ddotsat} is
\[
    e^{-\SCC(z)} \odot \gset(z) =
    \frac{1}{G(z)}.
\]
The numerator is expressed as the Hadamard product with the square root.
\end{proof}

Note that exponential Hadamard product can become an obstacle in a potential
future asymptotic analysis of satisfiable 2-CNF due to undefined behavior
of divergent series.
To assist in this journey, we propose another formulation of the last result
containing fewer Hadamard products, but including a sort of large power coefficient extraction \cite[Theorem VIII.8]{flajolet2009analytic}.

\begin{theorem} \label{th:variant:satisfiable}
The number of satisfiable 2-CNFs with $n$ variables and $m$ clauses is
\[
    2^n n! [z^n w^m]
    \gset((1 + w)^{2(n-1)} z, w)
    \sqrt{G(z,w) \odot \frac{1}{G(z,w)}}.
\]
\end{theorem}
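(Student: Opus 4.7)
The plan is to start from the closed form given in~\cref{th:satisfiable},
\[
    \ddot{\SAT}(z, w) = G(z, w) \cdot \bigl(H(z, w) \odot_z \idset(2z, w)\bigr),
\]
with $H(z, w) := \sqrt{G(z, w) \odot_z 1/G(z, w)}$, and to recover the count directly from the defining normalization of the Implication GF. Indeed, by~\cref{def:counting:digraphs}, the number of satisfiable 2-CNFs with $n$ variables and $m$ clauses equals $2^n n! (1+w)^{n(n-1)} [z^n w^m] \ddot{\SAT}(z, w)$, so the theorem reduces to the coefficient identity
\[
    (1+w)^{n(n-1)} [z^n] \ddot{\SAT}(z, w) = [z^n] \gset\!\bigl((1+w)^{2(n-1)} z,\, w\bigr) \cdot H(z, w).
\]

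I would carry this out in three steps. First, I would eliminate the inner Hadamard product: substituting $2z$ for $z$ cancels the $2^m$ in the denominator of $\idset$, giving the explicit series $\idset(2z, w) = \sum_m z^m / \bigl((1+w)^{m(m-1)} m!\bigr)$, so $H(z, w) \odot_z \idset(2z, w)$ becomes an ordinary power series in $z$ with transparent coefficients. Second, I would write $[z^n] \ddot{\SAT}(z, w)$ as a Cauchy convolution over $k + m = n$, using the known coefficients $(1+w)^{k(k-1)/2}/k!$ of $G(z, w)$. Third, I would expand $\gset((1+w)^{2(n-1)} z, w) \cdot H(z, w)$ in the same convolutional form, so that both sides become sums of the shape $\sum_{k+m=n} (1+w)^{E_\pm(n,k)} h_m(w)/(k!\,m!)$, and then match term by term.

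The only non-trivial ingredient is an exponent bookkeeping check: after multiplication by $(1+w)^{n(n-1)}$, the $k$-th summand on the left carries $(1+w)^{n(n-1) + k(k-1)/2 - m(m-1)}$, while the corresponding summand on the right carries $(1+w)^{2(n-1)k - k(k-1)/2}$. Substituting $m = n - k$, both expressions collapse to $k(4n - 3 - k)/2$, so the identity holds termwise and the theorem follows. There is no analytic subtlety — everything is formal in $z$ and polynomial in $w$ for fixed $n$ — and the $n$-dependent substitution $(1+w)^{2(n-1)} z$ inside $\gset$ is harmless because only the single coefficient $[z^n]$ is extracted. Conceptually, this reformulation trades the $\idset$ factor and its Hadamard product in~\cref{th:satisfiable} for a rescaling of the argument of $\gset$, which should be more amenable to future saddle-point analysis.
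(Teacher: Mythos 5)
Your proposal is correct and follows essentially the same route as the paper: both start from \cref{th:satisfiable}, expand the coefficient of $z^n$ as a binomial convolution, and verify that the powers of $(1+w)$ match once the $n$-dependent factor is absorbed into the rescaled argument of $\gset$ (your termwise exponent $k(4n-3-k)/2$ is the same bookkeeping the paper performs by rewriting the exponent as $4\binom{n}{2}-\binom{n-k}{2}-2(n-1)k$). The only cosmetic difference is that you pass from the Implication GF to counts via the normalization $2^n n!(1+w)^{n(n-1)}$ directly, whereas the paper uses the Hadamard product with $D(2z)$; by \cref{proposition:conversion} these are the same operation.
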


\begin{proof}
Let $A(z)$ denote the function
\[
    A(z) = \sqrt{G(z) \odot \frac{1}{G(z)}}.
\]
From~\cref{th:satisfiable},
the number $\SAT_{n,m}$ of satisfiable 2-CNFs
with $n$ variables and $m$ edges is
\[
    \SAT_{n,m} =
    n! [z^n w^m]
    D(2z) \odot \left(
    G(z) \left(
    A(z)
    \odot \idset(2z) \right) \right).
\]
Replacing $D(2z)$ and $\idset(2z)$ by their expressions
from~\cref{proposition:conversion}
and extracting the $[z^n]$ coefficient, we obtain
\begin{align*}
    \SAT_{n,m} &=
    2^n
    [w^m]
    (1+w)^{n(n-1)}
    \sum_{k=0}^n
    \binom{n}{k}
    (1+w)^{\binom{n-k}{2}}
    \frac{k! [z^k] A(z)}{(1+w)^{k(k-1)}}
    \\&=
    2^n
    [w^m]
    \sum_{k=0}^n
    \binom{n}{k}
    (1+w)^{\binom{n-k}{2} + n(n-1) - k(k-1)}
    k! [z^k] A(z) .
\end{align*}
Rewriting the power of $(1+w)$ as $4 \binom{n}{2} - \binom{n-k}{2} - 2(n-1) k$,
we obtain
\begin{align*}
    \SAT_{n,m} &=
    2^n
    [w^m]
    (1+w)^{4 \binom{n}{2}}
    \sum_{k=0}^n
    \binom{n}{k}
    (1+w)^{- \binom{n-k}{2}}
    k! [z^k] A((1+w)^{-2(n-1)} z, w)
    \\&=
    2^n
    n! [z^n w^m]
    (1+w)^{4 \binom{n}{2}}
    \gset(z) A((1+w)^{-2(n-1)} z, w)
    \\&=
    2^n
    n! [z^n w^m]
    \gset((1+w)^{2(n-1)} z) A(z).
\end{align*}
\end{proof}

The second implication of~\cref{th:cnf:scc:cscc} is
the enumeration of contradictory SCCs.

\begin{theorem} \label{th:CSCC}
The Exponential GF of contradictory strongly connected
implication digraphs (components) is given by
\[
    \CSCC(z)
    =
    \dfrac{1}{2} \SCC(2z)
    +
    \log \left(
    D(z) \odot 
        \dfrac{D(z)}{G(2z)}
    \right).
\]
\end{theorem}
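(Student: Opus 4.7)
The plan is to specialize the general identity~\eqref{eq:chosen:cscc:scc} of \cref{th:cnf:scc:cscc} to the unrestricted setting $\scc(z) = \SCC(z)$ and $\cscc(z) = \CSCC(z)$, where every ordinary and every contradictory strongly connected component is allowed. The left-hand side then becomes the Implication GF of all 2-CNFs, which by \cref{th:CNF:D} is $D(z/2)$, while the denominator $e^{-\SCC(z)} \odot_z \gset(z)$ collapses to $1/G(z)$ via \cref{th:SCC} -- the same simplification already used in the proof of \cref{th:satisfiable}. After these substitutions the identity reads
\[
\frac{D(z/2)}{G(z)} = e^{\CSCC(z) - \SCC(2z)/2} \odot_z \idset(z).
\]

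To isolate $\CSCC(z)$, I would next invert the Hadamard product against $\idset(z)$ by applying $\odot_z D(2z)$ to both sides and invoking the conversion formula of \cref{proposition:conversion}, which gives
\[
\frac{D(z/2)}{G(z)} \odot_z D(2z) = e^{\CSCC(z) - \SCC(2z)/2}.
\]
Taking logarithms already yields a valid expression for $\CSCC(z)$, but in a form different from the one claimed by the theorem, so the remaining work is to identify the left-hand side with $D(z) \odot_z D(z)/G(2z)$.

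The key manipulation will be the following algebraic observation: $D(z/2)/G(z) = h(z/2)$ for the single function $h(z) := D(z)/G(2z)$, since substituting $z \mapsto z/2$ in $h$ restores $G(2 \cdot z/2) = G(z)$ in the denominator. Once this factoring is in place, I will apply the elementary rescaling rule $h(z/2) \odot_z g(2z) = h(z) \odot_z g(z)$ -- which holds coefficient-wise because the factors $2^{-n}$ and $2^{n}$ cancel -- with $g = D$, obtaining
\[
\frac{D(z/2)}{G(z)} \odot_z D(2z) = \frac{D(z)}{G(2z)} \odot_z D(z) = D(z) \odot_z \frac{D(z)}{G(2z)}.
\]
Substituting back and taking logarithms will then yield the announced formula for $\CSCC(z)$.

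The main obstacle is precisely this last simplification step: applying a Hadamard scaling rule to a \emph{product} of two series requires recognizing that $D(z/2)/G(z)$ can be written as $h(z/2)$ for a single function $h$. Without this factoring trick one would have to verify the identity coefficient-by-coefficient, which ultimately reduces to the commutation of series inversion with rescaling, namely $[z^n/n!]\,(1/G(2z)) = 2^n [z^n/n!]\,(1/G(z))$ -- an equivalent but less transparent route.
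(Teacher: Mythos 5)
Your proposal is correct and follows essentially the same route as the paper: specialize \cref{th:cnf:scc:cscc} to $\scc=\SCC$, $\cscc=\CSCC$, use $\ddot{\CNF}(z)=D(z/2)$ from \cref{th:CNF:D} and $e^{-\SCC(z)}\odot_z\gset(z)=1/G(z)$, convert back with $\odot_z D(2z)$, and finish with the rescaling identity $A(2z)\odot_z B(z)=A(z)\odot_z B(2z)$ applied after writing $D(z/2)/G(z)$ as $h(z/2)$ with $h(z)=D(z)/G(2z)$. The paper's proof is terser but relies on exactly these ingredients, so no further comment is needed.
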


\begin{proof}
    By applying~\cref{th:cnf:scc:cscc}, we obtain
    \[
        \CSCC(z)
        =
        \dfrac{1}{2} \SCC(2z)
        +
        \log \left(
        D(2z) \odot \left[
            \left(
                e^{-\SCC(z)} \odot \gset(z)
            \right)
            \ddot{\CNF}(z)
        \right]
        \right)
    \]
    where $\ddot{\CNF}(z)$ is equal to $D(z/2)$
    according to~\cref{th:CNF:D}.
    Applying the property \( e^{-\SCC(z)} = G(z) \odot G(z)^{-1}\)
    and the Hadamard property
    $A(2z) \odot B(z) = A(z) \odot B(2z)$ finishes the proof.
\end{proof}

Finally, the most detailed description of implication digraphs with
marked parameters including source-like components, isolated components and
marked contradictory SCCs summarizes several of the previous results.

\begin{theorem} \label{th:detailed:CNF}
Let $\CNF(z, u, v)$ denote the Exponential GF
of the implication digraphs whose Exponential GFs of allowed
ordinary and contradictory SCCs are, respectively, $\scc(z)$ and $\cscc(z)$,
and the variables \( z \), \( u \) and \( v \) mark, respectively,
the vertices, non-isolated source-like ordinary SCCs
    and unordered tuples of isolated ordinary components (in the sense
    of~\cref{th:cnf:u:v}),
then
\begin{align*}
    &\CNF_{\scc, \cscc}(z, u, v)
    \\&=
    \left(
    \left[
        \dfrac{
            \left( e^{(u-1) \scc(z)} \odot \gset(z) \right)
            \left( e^{\cscc(z)-\scc(2z)/2} \odot \idset
            (z)
            \right)
        }{
            e^{-\scc(z)} \odot \gset(z)
        }
    \right]
    \odot D(2z)
    \right)
    e^{(v+1-2u)\scc(2z)/2}.
\end{align*}
\end{theorem}

\begin{proof}
We start with~\eqref{main:eq} from \cref{th:cnf:scc:cscc:u:v}.
The variable change
\[
    (s, t) = \left(u-1, \frac{v+1-2u}{2} \right)
\]
is applied
\begin{equation} \label{eq:cnf:scc:cscc:u:v:partial}
    \CNF_{\scc,\cscc}(z, u, v)
    =
    \left(\left[
        \left( e^{(u-1) \scc(z)} \odot \gset(z) \right)
        \ddot{\CNF}_{\scc,\cscc}(z,1,1)
    \right] \odot D(2z) \right)
    e^{(v+1-2u) \scc(2z)/2}.
\end{equation}
We fix $u=0$ and solve with respect to $\ddot{\CNF}_{\scc,\cscc}(z,1,1)$,
using \cref{proposition:conversion} to reverse the exponential Hadamard
    products:
\[
    \ddot{\CNF}_{\scc,\cscc}(z,1,1)
    =
    \frac{
        \left(
            \CNF_{\scc,\cscc}(z, 0, v)
            e^{- (v+1) \scc(2z)/2}
        \right)
        \odot
        \idset(z)
    }{
        e^{- \scc(z)} \odot \gset(z)
    }
    \, .
\]
The expression of $\CNF_{\scc,\cscc}(z, 0, v)$
from~\eqref{eq:no:source:like} is injected:
\[
    \ddot{\CNF}_{\scc,\cscc}(z,1,1)
    =
    \frac{
        e^{\cscc(z) - \scc(2z)/2}
        \odot
        \idset(z)
    }{
        e^{- \scc(z)} \odot \gset(z)
    }
    \, .
\]
The result of the theorem is obtained
by injecting this last equation into~\eqref{eq:cnf:scc:cscc:u:v:partial}.
\end{proof}

\begin{remark}
In \cref{th:detailed:CNF}, we could introduce an additional variable $q$
marking the contradictory SCCs as well.
To do so, simply replace the generating function $\cscc(z)$
with $q \, \cscc(z)$:
\begin{align*}
    &\CNF_{\scc, \cscc}(z, u, v, q)
    \\&=
    \left(
    \left[
        \dfrac{
            \left( e^{(u-1) \scc(z)} \odot \gset(z) \right)
            \left( e^{q\, \cscc(z)-\scc(2z)/2} \odot \idset
            (z)
            \right)
        }{
            e^{-\scc(z)} \odot \gset(z)
        }
    \right]
    \odot D(2z)
    \right)
    e^{(v+1-2u)\scc(2z)/2}.
\end{align*}
The same applies to ordinary SCC.
\end{remark}

\section{Numerical results}
\label{sec:numerical}

The first few coefficients of the sequences enumerating satisfiable 2-CNF
and contradictory SCC are given in~\cref{table:sat:1,table:sat:2,table:cscc}.
Using exhaustive generation techniques, we have verified that our
enumeration scheme gives correct answers for all 2-CNF families with at
most \( 4 \) Boolean variables and up to \( 7 \) clauses.
It is still worth treating some specific examples by hand.

\begin{table}[hbt!]
\centering
\caption{\label{table:sat:1}Enumerating satisfiable 2-SAT formulae. Here, $a_n$ is the counting sequence
of satisfiable formulae, where $n$ denotes the number of Boolean variables. We provide also
 $a_{n,m}$ where $m$ is the number of clauses.}
\scalebox{0.85}{ 
\begin{tabular}[t]{ c r r r r r r r }
\toprule
 $n$ & $a_n$        & $m=0$ & $m=1$ & $m=2$ & $m=3$ & $m=4$ & $m=5$ \\
\midrule
 1 & 1 &                      1 & 0  & 0    & 0     & 0       & 0        \\
 2 & 15 &                     1 & 4  & 6    & 4     & 0       & 0        \\
 3 & 2397 &                   1 & 12 & 66   & 220   & 486     & 684      \\
 4 & 3049713 &                1 & 24 & 276  & 2024  & 10596   & 41616    \\
 5 & 28694311447 &            1 & 40 & 780  & 9880  & 91320   & 654408   \\
 6 & 2034602766692687 &       1 & 60 & 1770 & 34220 & 487500  & 5451072  \\
 7 & 1115068294703296663717 & 1 & 84 & 3486 & 95284 & 1929270 & 30847236 \\
\bottomrule
\end{tabular}}
\end{table}

As a first check, we observe that the coefficients $a_{n,\,m}$ are those of $[w^m](1+w)^{2n(n-1)}$ for all
$m \leq 3$. Indeed in expectation three quarters of the clauses are SAT during a
random assignment by a greedy algorithm of the variables. The
probabilistic method~\cite{PROBABILISTIC} tells us then that for all $m \leq 3$,
there is only one integer value that exceeds the expected number of
satisfied clauses by the greedy algorithm, and this
 unique integer is the value $m$.

As a second check, the first discrepancy is given by $a_{3,\,4} = 486$ where as there are in total
$[w^4](1+w)^{12} = 495$ $2$-CNF formulae built with $3$ variables and $4$ clauses. The $9$ UNSAT formulae
built with the variables $x, \, y, \, z$ and $4$ clauses can be deduced
by considering all the permutations of literals
\( \{x, \n x, y, \n y, z, \n z\} \)
from the constructions given
in~\eqref{eq:2NDCHECK}, where
$6$ formulae come from the first construction and $3$ come from the second:
\\
\begin{equation}\label{eq:2NDCHECK}
\begin{tabular}{c c c}
  $  \begin{cases}
    x \vee y\\
    x \vee \n y\\
    \n x \vee z \\
    \n x \vee \n z
  \end{cases}
  $
  & $\qquad \qquad$ & 
  $   \begin{cases}
    x \vee y\\
    x \vee \n y \\
    \n x \vee y \\
    \n x \vee \n y
  \end{cases}
  $
\end{tabular}
\end{equation}

\begin{table}[hbt!]
\centering
\caption{\label{table:sat:2}Enumerating satisfiable 2-SAT formulae, continuation.}
\scalebox{0.85}{
\begin{tabular}[t]{ c r r r r r r r }
\toprule
 $n$ & $m=6$    & $m=7$ & $m=8$ & $m=9$ & $m=10$ & $m=11$ & $m=12$ \\
\midrule
 3 & 572       & 276        & 72          & 8            & 0   & 0     & 0      \\
 4 & 123528    & 275568     & 463680      & 596232       & 593928  & 462408   & 281896    \\
 5 & 3752600   & 17428040   & 65774970    & 202646120    & 514203264  & 1087043720   & 1937000920   \\
 6 & 49675760  & 377136960  & 2411974740  & 13063104000  & 60169952412 & 237115483560  & 805717285720  \\
 7 & 405181084 & 4485339276 & 42527890314 & 348648091120 & 2484665216376 & 15453747532944 & 84253905879486 \\
\bottomrule
\end{tabular}}
\end{table}

\begin{table}[hbt!]
\centering
\caption{\label{table:cscc}Enumerating contradictory strongly connected 2-CNF.
Here, $a_n$ is their counting sequence, where $n$ denotes the number of Boolean variables.
 $k$ denotes the excess of a component and is equal
to  its number of clauses minus its number of variables.}
\scalebox{0.8}{
\begin{tabular}[t]{ c r r r r r r r }
\toprule
 $n$ & $a_n$        & $k=1$ & $k=2$ & $k=3$ & $k=4$ & $k=5$ & $k=6$ \\
\midrule
 2 & 1 &                          0 & 1  & 0    & 0     & 0       & 0        \\
 3 & 1606 &                       6 & 84 & 316   & 492   & 417     & 212      \\
 4 & 12864042 &                   144 & 4104 & 38880  & 186864  & 559496   & 1175064    \\
 5 & 1035697286504 &              2880 & 152160 & 2779350  & 26769440  & 165382784   & 733763440   \\
 6 & 1137724245192445576 &        57600 & 5097600 & 157060200 & 2572386420 & 27182781120  & 207149446560  \\
 7 & 19275699325699284398997808 & 1209600 & 166199040 & 7932622320 & 201117551040 & 3285880363290 & 38654632189488 \\
\bottomrule
\end{tabular}
}
\end{table}

Below, we provide empirical time measurements
(performed on a 2014 MacBook Air, Intel Core
i5 with 1,4 GHz) for computing the
total number of satisfiable 2-SAT formulae with different numbers of
variables as an indication (see~\cref{table:time:sat,table:time:sat:m}),
by using the generating functions that we provide in the paper.
In the case with two parameters the exact calculation take
much longer due to necessity of considering bivariate series.

\begin{table}[hbt!]
\centering
\caption{\label{table:time:sat}Empirical time measurements for computing
    the total number of satisfiable 2-SAT formulae.}
\scalebox{0.8}{
\begin{tabular}[t]{ l r r r r r r r r r r r r}
\toprule
    $n$ & $10$ & $20$ & $40$ & $100$ & $200$ & $250$ & $350$ & $500$
    & $600$ & $700$ & $800$ & $900$ \\
\midrule
    Time & 0.3ms & 0.5ms & 16ms & 200ms & 2s & 3.76s & 14s & 45s
    & 1m34s & 2m30s & 3m55s & 6m52s \\
\bottomrule
\end{tabular}
}
\end{table}

\begin{table}[hbt!]
\centering
\caption{\label{table:time:sat:m}Empirical time measurements for computing
    the number of satisfiable 2-SAT formulae with given number of
    clauses. Decimal precision for interval arithmetic picked up
    heuristically.}
\scalebox{0.8}{
\begin{tabular}[t]{ l r r r r r r r r r }
\toprule
    $n$ & $10$ & $20$ & $25$ & $30$ & $35$ & $40$ & $45$ & $50$
    & $60$ \\
\midrule
    $m=n$ & 27ms  & 340ms & 870ms & 1.7s & 4s & 12.8s & 22.7s & 41.6s & 2min\\
    $m=2n$ & 50ms & 1.16s & 2.8s & 7.17s & 22.3s & 48.4s & 1m39s & 2m20s
    & 4m7s\\
\bottomrule
\end{tabular}
}
\end{table}

Recall that $\proba_{\SAT}(n,p)$ denotes the probability
for a random $(n,p)$ 2-CNF to be satisfiable,
and its limit probability in the critical window
\( p=(1 + \mu n^{-1/3}) / (2n) \) is denoted by
\[
    \proba_{\SAT, \infty}(\mu) =
    \lim_{n \to +\infty}
    \proba_{\SAT} \left(
        n,
        \frac{1}{2 n} \left(1 + \mu n^{-1/3} \right)
    \right).
\]
Using the data from generating functions with moderate values of $n$ we can provide
very precise although non-rigorous estimates for $\proba_{\SAT, \infty}(\mu)$. In~\cite{Deroulers}, Deroulers and Monasson used Monte-Carlo
simulation approach to empirically estimate the limit probability $\proba_{\SAT, \infty}(0)$ that a random 2-SAT
with clause probability \( p = \frac{1}{2n} \) is satisfiable,
which corresponds to the center of the critical window
of the phase transition.
Using the assumption that the limiting probability behaves as
\( \proba_{\SAT} \left(n, \frac{1}{2 n} \right) \sim \proba_{\SAT, \infty}(0) - c n^{-1/3} \),
they empirically estimated the
coefficient $c$ using linear regression which lead them to an estimate
\[
    \proba_{\SAT, \infty}(0) = 0.907 \pm 10^{-3},
\]
by using the data obtained from various values of $n$ up to $n = 5 \times 10^6$.
Although this assumption is very plausible by taking into account the analogy
with digraphs and random graphs~\cite{luczak2009critical,FPSAC20,janson1993birth},
it is still an open question,
as far as we know. However, using the same assumption 
 along with the machinery of generating functions, we can provide much
 more accurate predictions by simply taking more terms of the asymptotic
 expansion.

With our method, we do not have to use Monte-Carlo simulation to obtain the
finite-size probabilities with high accuracy: using the interval variant of
long arithmetic and recurrences from generating functions, we can obtain
these probabilities with arbitrarily high precision.
Relying on the asymptotic equivalence of the models \( (n,m) \) and \( (n,p) \)
as \( n \to \infty \),
we can argue that the limiting probabilities do not depend on which model is chosen.
The probabilities in the \( (n,p) \) model can be expressed via generating functions
using~\cref{proposition:proba:np}.
Inside the critical window of the phase transition
\( p = \frac{1}{2n}(1 + \mu n^{-1/3}) \) we use an assumption
that the limiting probability that a random 2-SAT formula with $n$ Boolean variables and
clause probability $p$ is satisfiable, asymptotically behaves as
\begin{equation}
\label{eq:full:asympt}
    \proba_{\SAT} \left(n, \frac{1}{2n}(1 + \mu n^{-1/3})\right)
    \sim
    \proba_{\SAT, \infty}(\mu) + c_1(\mu) n^{-1/3} + c_2(\mu) n^{-2/3} + \ldots
\end{equation}
and we can therefore use multidimensional linear regression to estimate the
coefficients \( (c_k(\mu))_{k=1}^\infty \) and $\proba_{\SAT, \infty}(\mu)$ in the setting
where almost no noise is present.
Our estimates for coefficients $c_1(0), \ldots, c_6(0)$
in the center of the critical window are given in~\cref{table:full:asympt}.

By computing these probabilities for only 100 points $n$ in the range from 100 to 5000,
which can be computed in only a few minutes (!),
and by ensuring that no numerical instability is present in our estimates
(also known as ``overfitting''), we predict, by applying linear
regression with dimension $7$,
\[
    \proba_{\SAT, \infty}(0) = 0.90622396067 \pm 10^{-11}.
\]
The estimated error $10^{-11}$ has been obtained by comparing
the regression models with different dimensions and with a different level of ``noise''
truncation, if we consider the measurements with smaller values of $n$ to be more ``noisy''.
These prediction can be improved by taking more points and a higher upper bound.

\begin{table}[hbt!]
\centering
\caption{\label{table:full:asympt}Numerical estimates of the coefficients of the
full asymptotic expansion~\eqref{eq:full:asympt}.}
\begin{tabular}[t]{ c l}
\toprule
$\proba_{\SAT, \infty}(0)$ & \texttt{ 0.90622396067 +/- 1e-11} \\
\midrule
$c_1$ & \texttt{ 0.212314432 +/- 2e-9}  \\
$c_2$ & \texttt{-0.17396477 +/- 2e-8} \\
$c_3$ & \texttt{ 0.066792 +/- 2e-6} \\
$c_4$ & \texttt{ 0.0155 +/- 2e-4} \\
$c_5$ & \texttt{-0.041 +/- 2e-3} \\
$c_6$ & \texttt{ 0.021 +/- 2e-3} \\
\bottomrule
\end{tabular}
\end{table}

By using this method for values of $\mu$ other than zero,
we obtain the predicted plot of
the limiting function \( \proba_{\SAT, \infty}(\mu) \) in the range \( \mu \in [-4, 4] \),
which is shown in~\cref{fig:limiting:plot}.

\section{Conclusion}
Random 2-CNFs are fundamental objects in combinatorics
and analysis of algorithms.
Having exact expressions for 2-SAT formulae potentially opens many new possibilities
to describe the properties of a typical 2-SAT formula.
However, the analytic tools to extract the asymptotic of the coefficients
of such generating functions
are not yet developed. More specifically, the GF
\begin{equation}
\label{eq:scc:conclusion}
    e^{-\SCC(z, w)} = G(z, w) \odot \dfrac{1}{G(z, w)}
\end{equation}
appearing in~\cref{th:satisfiable,th:variant:satisfiable}
is a GF whose coefficients are growing faster than exponentially for any fixed
positive value of \( w \). One of the few tools for dealing with such divergent
series is the Large Powers Theorem \cite[Theorem
VIII.8]{flajolet2009analytic} and its variations, which requires a specific
representation with additional variables.
We expect that representation from~\cref{th:variant:satisfiable}
will be helpful in this direction.
Two other possibilities worth exploring are the formal integral
representation of~\eqref{eq:scc:conclusion}
(see \cite{flajolet2004airy} to grasp the difficulties involved)
or an application of Bender's theorem \cite{Bender75}
since the sequence is growing sufficiently quickly.

Phase transitions are intriguing phenomena,
linking combinatorics, algorithmics and statistical physics.
For example, the random instances of NP-complete problems
that are difficult to solve for heuristics
tend to appear inside the phase transition window
\cite{achlioptas2008algorithmic}.
Although 2-SAT is not a computationally challenging algorithmic problem,
its phase transition has for a long time eluded
the application of existing combinatorial tools.
It has embodied the \emph{simplest unsolved problem}
for various techniques at different times.
Even though the phase transition window
and its width have already been obtained
\cite{goerdt92,chvatalreed92,de2001random,bollobas2001scaling},
to the present day, a combinatorial description
inside the critical window is still missing.
The present paper constitutes a step in that direction.

\paragraph*{Acknowledgements.}
The authors are grateful to Danièle Gardy for her support and encouragement.
Sergey Dovgal was supported by the HÉRA project, funded by The French National
Research Agency, grant no.: ANR-18-CE25-0002.
Élie de Panafieu was supported by the Lincs (\url{www.lincs.fr})
and the Rise project RandNET, grant no.: H2020-EU.1.3.3.
Vlady Ravelomanana is partly supported by the CNRS IRN Project ``Aléa Network''.


\bibliographystyle{acm}
\bibliography{2SAT}

\begin{thebibliography}{10}

\bibitem{achlioptas2008algorithmic}
{\sc Achlioptas, D., and Coja-Oghlan, A.}
\newblock Algorithmic barriers from phase transitions.
\newblock In {\em The 49th Annual IEEE Symposium on Foundations of Computer
  Science\/} (2008), IEEE, pp.~793--802.

\bibitem{addario2012continuum}
{\sc Addario-Berry, L., Broutin, N., and Goldschmidt, C.}
\newblock The continuum limit of critical random graphs.
\newblock {\em Probability Theory and Related Fields 152}, 3 (2012), 367--406.

\bibitem{APT79}
{\sc Aspvall, B., Plass, M.~F., and Tarjan, R.~E.}
\newblock A linear-time algorithm for testing the truth of certain quantified
  {Boolean} formulas.
\newblock {\em Information processing letters 8}, 3 (1979), 121--123.

\bibitem{Bender75}
{\sc Bender, E.~A.}
\newblock An asymptotic expansion for the coefficients of some formal power
  series.
\newblock {\em Journal of the London Mathematical Society 2}, 3 (1975),
  451--458.

\bibitem{bergeron1998combinatorial}
{\sc Bergeron, F., Labelle, G., and Leroux, P.}
\newblock {\em Combinatorial species and tree-like structures}, vol.~67 of {\em
  Encyclopedia of {M}athematics and its {A}pplications}.
\newblock Cambridge University Press, 1998.

\bibitem{bollobasrandomgraphs}
{\sc Bollob\'{a}s, B.}
\newblock {\em Random graphs}.
\newblock Academic Press, Inc., London, 1985.

\bibitem{bollobas2001scaling}
{\sc Bollob{\'a}s, B., Borgs, C., Chayes, J.~T., Kim, J.~H., and Wilson, D.~B.}
\newblock The scaling window of the {2-SAT} transition.
\newblock {\em Random Structures \& Algorithms 18}, 3 (2001), 201--256.

\bibitem{borchardt1861uber}
{\sc Borchardt, C.}
\newblock {\"U}ber eine {I}nterpolationsformel f\"ur eine {A}rt symmetrischer
  {F}unctionen und \"uber deren {A}nwendung.
\newblock {\em Mathematische Abhandlungen der K\"oniglichen Akademie der
  Wissenschaften zu Berlin, Akad\/} (1861).

\bibitem{Cayley}
{\sc Cayley, A.}
\newblock A theorem on trees.
\newblock {\em Quart. J. Pure Appl. Math. 23\/} (1889), 376--378.

\bibitem{chvatalreed92}
{\sc Chv\'{a}tal, V., and Reed, B.}
\newblock Mick gets some (the odds are on his side).
\newblock In {\em Proceedings of the 33th Annual Symposium on Foundations of
  Computer Science\/} (1992), pp.~620--627.

\bibitem{coja2016asymptotic}
{\sc Coja-Oghlan, A., and Panagiotou, K.}
\newblock The asymptotic {k-SAT} threshold.
\newblock {\em Advances in Mathematics 288\/} (2016), 985--1068.

\bibitem{collet2018threshold}
{\sc Collet, G., de~Panafieu, E., Gardy, D., Gittenberger, B., and
  Ravelomanana, V.}
\newblock Threshold functions for small subgraphs in simple graphs and
  multigraphs.
\newblock {\em European Journal of Combinatorics 88\/} (2020), 103113.

\bibitem{Coo71}
{\sc Cook, S.~A.}
\newblock The complexity of theorem-proving procedures.
\newblock In {\em Proceedings of the 3rd ACM Symposium on Theory of
  Computing\/} (1971), pp.~151--158.

\bibitem{coppersmith2004random}
{\sc Coppersmith, D., Gamarnik, D., Hajiaghayi, M.~T., and Sorkin, G.~B.}
\newblock Random {MAX SAT}, random {MAX CUT}, and their phase transitions.
\newblock {\em Random Structures \& Algorithms 24}, 4 (2004), 502--545.

\bibitem{herve2011random}
{\sc Daud{\'e}, H., and Ravelomanana, V.}
\newblock Random 2 {XORSAT} phase transition.
\newblock {\em Algorithmica 59}, 1 (2011), 48--65.

\bibitem{de2001random}
{\sc de~La~Vega, W.~F.}
\newblock Random {2-SAT}: results and problems.
\newblock {\em Theoretical computer science 265}, 1-2 (2001), 131--146.

\bibitem{de2015phase}
{\sc de~Panafieu, {\'E}.}
\newblock Phase transition of random non-uniform hypergraphs.
\newblock {\em Journal of Discrete Algorithms 31\/} (2015), 26--39.

\bibitem{panafieu2016analytic}
{\sc de~Panafieu, {\'E}.}
\newblock Analytic combinatorics of connected graphs.
\newblock {\em Random Structures \& Algorithms 55\/} (2019), 427--495.

\bibitem{de2019symbolic}
{\sc de~Panafieu, {\'E}., and Dovgal, S.}
\newblock Symbolic method and directed graph enumeration.
\newblock In {\em Proceedings of EUROCOMB 2019, Acta Mathematica Universitatis
  Comenianae\/} (2019), vol.~88(3), pp.~989--996.

\bibitem{FPSAC20}
{\sc de~Panafieu, E., and Dovgal, S.}
\newblock Counting directed acyclic and elementary digraphs.
\newblock In {\em Proc. of the 32nd {I}nt. {C}onf. on {F}ormal {P}ower {S}eries
  and {A}lgebraic {C}ombinatorics\/} (2020), p.~84B.2.

\bibitem{Deroulers}
{\sc Deroulers, C., and Monasson, R.}
\newblock Criticality and universality in the unit-propagation search rule.
\newblock {\em The European Physical Journal B -- Condensed Matter and Complex
  Systems 49\/} (2006), 339--369.

\bibitem{DingSlySun}
{\sc Ding, J., Sly, A., and Sun, N.}
\newblock Proof of the satisfiability conjecture for large $k$.
\newblock {\em Annals of Mathematics 196\/} (2022), 1--388.

\bibitem{dovgal2019birth}
{\sc Dovgal, S.}
\newblock The birth of the contradictory component in random 2-{SAT}.
\newblock {\em preprint ArXiv: 1904.10266\/} (2019).

\bibitem{DigraphWindow}
{\sc Dovgal, S., de~Panafieu, E., Ralaivaosaona, D., Rasendrahasina, V., and
  Wagner, S.}
\newblock The birth of the strong components.
\newblock {\em preprint ArXiv: 2009.12127\/} (2020).

\bibitem{ER60}
{\sc Erd\H{o}s, P., and R\'enyi, A.}
\newblock On the evolution of random graphs.
\newblock {\em Publication of the Mathematical Institute of the Hungarian
  Academy of Sciences 5\/} (1960), 17.

\bibitem{even1975complexity}
{\sc Even, S., Itai, A., and Shamir, A.}
\newblock On the complexity of time table and multi-commodity flow problems.
\newblock In {\em Proceedings of the 16th Annual Symposium on Foundations of
  Computer Science\/} (1975), IEEE, pp.~184--193.

\bibitem{flajolet2004airy}
{\sc Flajolet, P., Salvy, B., and Schaeffer, G.}
\newblock Airy phenomena and analytic combinatorics of connected graphs.
\newblock {\em The Electronic Journal of Combinatorics 11}, 1 (2004), 34.

\bibitem{flajolet2009analytic}
{\sc Flajolet, P., and Sedgewick, R.}
\newblock {\em Analytic combinatorics}.
\newblock Cambridge University press, 2009.

\bibitem{garey1974some}
{\sc Garey, M.~R., Johnson, D.~S., and Stockmeyer, L.}
\newblock Some simplified {NP}-complete problems.
\newblock In {\em Proceedings of the sixth annual ACM Symposium on Theory of
  Computing\/} (1974), pp.~47--63.

\bibitem{Gessel95}
{\sc Gessel, I.~M.}
\newblock Enumerative applications of a decomposition for graphs and digraphs.
\newblock {\em Discrete Mathematics 139\/} (1995), 257--271.

\bibitem{goerdt92}
{\sc Goerdt, A.}
\newblock A threshold for unsatisfiability.
\newblock {\em J. Comput. Syst. Sci. 53\/} (1996), 469--486.

\bibitem{Christina2019}
{\sc Goldschmidt, C., and Stephenson, R.}
\newblock The scaling limit of a critical random directed graph.
\newblock {\em arXiv:1905.05397v3 [math.PR]\/} (2021).
\newblock Preprint to appear in Annals of Applied Probability.

\bibitem{goulden2004combinatorial}
{\sc Goulden, I.~P., and Jackson, D.~M.}
\newblock {\em Combinatorial enumeration}.
\newblock Courier Corporation, 2004.

\bibitem{HararyPalmer}
{\sc Harary, F., and Palmer, E.}
\newblock {\em Graphical {E}numeration}.
\newblock Academic Press, 1973.

\bibitem{hastad2001}
{\sc H\r{a}stad, J.}
\newblock Some optimal inapproximability results.
\newblock {\em J. ACM 48}, 4 (2001), 798--859.

\bibitem{janson1993birth}
{\sc Janson, S., Knuth, D.~E., {\L}uczak, T., and Pittel, B.}
\newblock The birth of the giant component.
\newblock {\em Random Structures \& Algorithms 4}, 3 (1993), 233--358.

\bibitem{joyal1981combinatorial}
{\sc Joyal, A.}
\newblock Une théorie combinatoire des séries formelles (a combinatorial
  theory of formal series).
\newblock {\em Advances in Mathematics 42\/} (1981), 1--82.

\bibitem{Karp1990}
{\sc Karp, R.~M.}
\newblock The transitive closure of a random digraph.
\newblock {\em Random Structures Algorithms 1}, 1 (1990), 73--93.

\bibitem{kim2008finding}
{\sc Kim, J.~H.}
\newblock Finding cores of random {2-SAT} formulae via {Poisson} cloning.
\newblock {\em preprint ArXiv: 0808.1599\/} (2008).

\bibitem{krom1967decision}
{\sc Krom, M.~R.}
\newblock The decision problem for a class of first-order formulas in which all
  disjunctions are binary.
\newblock {\em Mathematical Logic Quarterly 13\/} (1967), 15--20.

\bibitem{liskovets17contribution}
{\sc Liskovets, V.~A.}
\newblock A contribution to the enumeration of strongly connected digraphs.
\newblock {\em Dokl. AN BSSR 17\/} (1973), 1077--1080.

\bibitem{Luczak1990}
{\sc {\L}uczak, T.}
\newblock The phase transition in the evolution of random digraphs.
\newblock {\em J. Graph Theory 14}, 2 (1990), 217--223.

\bibitem{luczak2009critical}
{\sc {\L}uczak, T., and Seierstad, T.~G.}
\newblock The critical behavior of random digraphs.
\newblock {\em Random Structures \& Algorithms 35}, 3 (2009), 271--293.

\bibitem{monasson1997statistical}
{\sc Monasson, R., and Zecchina, R.}
\newblock Statistical mechanics of the random {K}-satisfiability model.
\newblock {\em Physical Review E 56}, 2 (1997), 1357.

\bibitem{PROBABILISTIC}
{\sc Noga, A., and Spencer, J.~H.}
\newblock {\em The Probabilistic Method}.
\newblock Wiley, New York, Second edition, 2004.

\bibitem{YeumPittel}
{\sc Pittel, B.~G., and Yeum, J.~A.}
\newblock How frequently is a system of 2-linear boolean equations solvable?
\newblock {\em Electr. Journal of Combinatorics 17}, 1 (2010).

\bibitem{AofA20}
{\sc Ralaivaosaona, D., Rasendrahasina, V., and Wagner, S.}
\newblock On the probability that a random digraph is acyclic.
\newblock In {\em Proc. of 31st {I}nt. {C}onf. on {P}robabilistic,
  {C}ombinatorial and {A}symptotic {M}ethods for the {A}nalysis of
  {A}lgorithms\/} (2020), vol.~25, pp.~25:1--25:18.

\bibitem{Robinson71}
{\sc Robinson, R.~W.}
\newblock Counting labeled acyclic digraphs.
\newblock In {\em Proc. {T}hird {A}nn {A}rbor {C}onf. {U}niv. {M}ichigan\/}
  (1971), pp.~239--273.

\bibitem{robinson1977counting}
{\sc Robinson, R.~W.}
\newblock Counting unlabeled acyclic digraphs.
\newblock In {\em Combinatorial mathematics V}. Springer, 1977, pp.~28--43.

\bibitem{stepanov1988some}
{\sc Stepanov, V.}
\newblock On some features of the structure of a random graph near a critical
  point.
\newblock {\em Theory of Probability \& Its Applications 32}, 4 (1988),
  573--594.

\bibitem{tenenbaum2015introduction}
{\sc Tenenbaum, G.}
\newblock {\em Introduction to analytic and probabilistic number theory},
  vol.~163.
\newblock American Mathematical Soc., 2015.

\bibitem{Wright71}
{\sc Wright, E.~M.}
\newblock The number of strong digraphs.
\newblock {\em Bull. Lond. Math. Soc. 3\/} (1971), 348--350.

\bibitem{Wright77}
{\sc Wright, E.~M.}
\newblock The number of connected sparsely edged graphs.
\newblock {\em Journal of Graph Theory 1\/} (1977), 317--330.

\bibitem{Wright78}
{\sc Wright, E.~M.}
\newblock The number of connected sparsely edged graphs~{II}: Smooth graphs.
\newblock {\em Journal of Graph Theory 2\/} (1978), 299--407.

\bibitem{Wright80}
{\sc Wright, E.~M.}
\newblock The number of connected sparsely edged graphs~{III}: Asymptotic
  results.
\newblock {\em Journal of Graph Theory 4}, 4 (1980), 393--407.

\end{thebibliography}


\end{document}